\newcommand{\PP}{\mathbb{P}}
\newcommand{\RR}{\mathbb{R}}
\DeclareMathOperator{\trop}{trop}
\begin{document}

\title*{Tritangent planes to space sextics: the algebraic and tropical stories}
\titlerunning{Tritangents to sextic curves.} 
\author{Corey Harris and Yoav Len}
\institute{Corey Harris \at Florida State University, 208 Love Building, 1017 Academic Way, Tallahassee, FL 32306-4510, \email{charris@math.fsu.edu}
\and Yoav Len \at University of Waterloo, 200 University Ave W, Waterloo, ON N2L 3G1. \email{yoav.len@uwaterloo.ca.}}
%
%
\maketitle

\abstract*{We discuss the  classical problem of counting planes tangent to general canonical sextic curves at three points.  
We determine the number of real tritangents when such a curve is real.
We then revisit a curve constructed by Emch with the greatest known number of real tritangents, and conversely construct a curve with very few real tritangents. Using recent results on the relation between algebraic and tropical theta characteristics, we show that the tropicalization of a canonical sextic curve has $15$ tritangent planes.}

\abstract{We discuss the  classical problem of counting planes tangent to general canonical sextic curves at three points.  
We determine the number of real tritangents when such a curve is real.
We then revisit a curve constructed by Emch with the greatest known number of real tritangents, and conversely construct a curve with very few real tritangents. Using recent results on the relation between algebraic and tropical theta characteristics, we show that the tropicalization of a canonical sextic curve has $15$ tritangent planes.}

\section{Introduction}
\label{sec:intro}
In this paper, we study tritangent planes to general sextic curves in three dimensional projective space (which, in particular, are not hyperelliptic). By general, we mean that the curve is the intersection of a smooth quadric and a smooth cubic. 
A plane in space is determined by three parameters, and when chosen generically, it  meets a sextic in six points. Requiring that two contact points coincide to form a tangent imposes a single condition on the parameters. Therefore, a finite number of planes are expected to be tangent to the curve at three points. 
Making this argument precise and finding the exact number of tritangent planes is more subtle, and dates back to the mid-19th century
with the work of Clebsch \cite{Cle64}.
Sixty years later, an understanding of these tritangents was the impetus and principle goal for Coble in Algebraic Geometry and Theta Functions \cite{Cob61} 

The tritangent planes to a sextic are closely related with other classical problems such as the 27 lines on a cubic surface \cite[Chapter V.4]{Har77}  and the 28 bitangents to a quartic curve \cite[Chapter 6]{Dol12}.  The projection from a general point of a cubic surface is a double cover of a plane branched along a quartic.  The image of each of the 27 lines is bitangent to this quartic, with an additional bitangent given by blowing up the indeterminacy locus of the projection. 
Quite similarly, a del Pezzo surface of degree one forms a double cover of a quadric cone, branched along a smooth sextic of genus 4.
The $(-1)$-curves are mapped to conics, each meeting the sextic in three points, and the planes containing these conics are tritangent planes, i.e., they have intersection multiplicity two at each of the three points.

In a lecture given by Arnold at his 60th birthday conference at the Fields Institute, he referred to this as one of his mathematical trinities (\cite{Arn97}). Other examples of trinities are the exceptional Lie algebras $E_6,E_7,E_8$, the rings $\mathbb{R},\mathbb{C},\mathbb{H}$,  and the three polytopes tetrahedron, cube, and dodecahedron. 
\begin{quote}
The next dream I want to present is an even more fantastic set of theorems and conjectures. Here I also have no theory and actually the ideas form a kind of religion rather than mathematics.
The key observation is that in mathematics one encounters many trinities... I mean the existence of some “functorial” constructions connecting different trinities. The knowledge of the existence of these diagrams provides some new conjectures which might turn to be true theorems... I have heard from John MacKay that the  straight lines on a cubical surface, the  tangents of a quartic plane curve, and the  tritangent planes of a canonic sextic curve of genus 4 form a trinity parallel to $E_6$, $E_7$ and $E_8$.
\end{quote}

Our interest in this paper stems from two variations of the classical problem: the real case and the tropical case. 
In the case of a real space sextic, one may ask how many of the tritangent planes are real.  In Sect.~\ref{sec:real}, we appeal to the theory of real theta characteristics to show that the answer depends on the topology of the real curve: the number of connected components and how they are arranged on the Riemann surface of the complex curve.  In some cases, all the tritangents may be real, but their three points of tangency may include a complex-conjugate pair (Theorem~\ref{realTheta}).  We continue to explore this phenomenon through explicit examples.  We reexamine a construction of Arnold Emch, in which he claimed to find 120 tritangents, and show that he over counted.

\medskip

\noindent\textbf{Theorem~\ref{only108}.} \textit{Emch's curve has only 108 planes tritangent at 3 real points.}
\medskip 

\noindent On the other extreme, we construct a real space sextic with only one connected component and find its 8 real tritangent planes. 

In Sect.~\ref{sec:trop} we set up a tropical formulation of the problem. Once the notion of a tropical tritangent plane is established, it is natural to ask how many such planes are carried by a tropical sextic curve. 
This is a natural sequel to earlier tropical counting problems, such as the number of lines on a tropical cubic surface \cite{Vig10}, and the number of bitangents to a tropical plane quartic \cite{BLM16, CJ17}.

\medskip

\noindent\textbf{Theorem~\ref{thm:mainTropThm}.} \textit{A smooth tropical sextic curve $\Gamma$ in $\mathbb{R}^3$ has at most $15$ classes of tritangent planes. If it is the  tropicalization of a sextic $C$ on a smooth quadric in $\mathbb{P}^3$, then it has  exactly $15$ equivalence classes of tritangent planes.}

\medskip
\noindent In (Lemma~\ref{lemma:tropicalRuled}) we show that the question can actually be replaced by the simpler problem of counting  tritangents to tropical curves of bi-degree $(3,3)$ in tropical $\PP^1 \times \PP^1$. This result paves the way for a computational study of tropical tritangents.

\section{Algebraic space sextics}
\label{sec:alg}

Throughout this section, we work over an algebraically closed field $\Bbbk$ of characteristic different from $2$. For simplicity, the reader may assume that the  field is $\mathbb{C}$.

Let $C\subset\mathbb{P}^3$ be the intersection of a quadric   and a cubic surface. Such a curve is a smooth canonical sextic \cite[Chapter IV, Prop. 6.3]{Har77}. 
The intersection of a hyperplane with $C$ is a divisor of degree $6$ and rank $3$, and is the canonical divisor $K_C$ of $C$. In particular, the genus of $C$ is $4$. It follows that whenever $H$ is tangent to $C$ at $3$ points, those points form a divisor $D$ such that $2D\simeq K_C$.

\begin{definition}
A divisor class $[D]$ with the property that $2D\simeq K_C$ is called a \emph{theta characteristic}. A theta characteristic is said to be  \emph{odd} or \emph{even} according with the parity of $\dim H^0(C,D)$. 
\end{definition}


\begin{theorem}
\label{thm:tritangentsAlgebraic}
Let $C$ be the sextic obtained from the intersection of a smooth cubic and a smooth quadric in $\mathbb{P}^3$. Then it has $120$ tritangent planes, in bijection with its odd theta characteristics. 
\end{theorem}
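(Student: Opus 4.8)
The plan is to establish two things: first, that tritangent planes correspond exactly to odd theta characteristics, and second, that there are precisely $120$ of them. The bijection is already set up by the discussion preceding the theorem: a plane $H$ tritangent to $C$ cuts out a divisor $D$ of degree $3$ with $2D \simeq K_C$, so $[D]$ is a theta characteristic. The parity condition arises because tangency at three points means $H \cap C = 2D$ as divisors, and the existence of such an effective representative forces $h^0(C,D) \geq 1$; generically the three points are distinct and impose independent conditions, so $h^0(C,D)=1$, giving an \emph{odd} theta characteristic.

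\begin{proof}[Proof sketch]
First I would carefully set up the correspondence in both directions. Given a tritangent plane $H$, the intersection $H \cap C$ is a hyperplane section, hence linearly equivalent to $K_C$, and tangency at three points $p_1,p_2,p_3$ means this section equals $2(p_1+p_2+p_3)$. Setting $D = p_1+p_2+p_3$, we get $2D \simeq K_C$, so $[D]$ is a theta characteristic with $D$ effective; I would argue that for general such $C$ the three points are distinct and $h^0(C,D)=1$, making $[D]$ odd. Conversely, given an odd theta characteristic $[D]$, since $\deg D = 3 = g-1$ and the characteristic is odd, we have $h^0(C,D)=1$, so $D$ has a unique effective representative $p_1+p_2+p_3$; then $2D \simeq K_C$ means $p_1+p_2+p_3$ spans a plane meeting $C$ with multiplicity two at each point, i.e. a tritangent plane. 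I would need to confirm $h^0(C,D)=1$ rather than $\geq 2$ (an even value would correspond to a higher-dimensional linear system and not a single plane), which is where the \emph{odd} parity is essential.

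The second, and arithmetically central, step is the count. The standard fact is that on a smooth curve of genus $g$ over an algebraically closed field of characteristic $\neq 2$, the theta characteristics form a torsor under the $2$-torsion subgroup $\mathrm{Jac}(C)[2] \cong (\ZZ/2\ZZ)^{2g}$, so there are $2^{2g}$ of them in total; for $g=4$ this is $2^8 = 256$. The number of odd theta characteristics is $2^{g-1}(2^g-1)$ and the number of even ones is $2^{g-1}(2^g+1)$. For $g=4$ this gives $2^3(2^4-1) = 8 \cdot 15 = 120$ odd characteristics, which is exactly the desired count. I would either invoke these classical enumerations directly or derive them via the Arf invariant of the Weil-pairing quadratic form associated to the theta characteristics.

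The main obstacle I anticipate is not the raw count, which is classical, but justifying that the correspondence is a genuine bijection for the specific curve $C$ under consideration, rather than merely generically. In particular I must rule out degenerations where the three tangency points collide or where an odd theta characteristic fails to be represented by a \emph{reduced} tritangent plane, and conversely verify that every odd theta characteristic yields a distinct plane (no plane is counted twice and $h^0(C,D)=1$ holds uniformly). Establishing that smoothness of both the quadric and the cubic guarantees this genericity—so that all $120$ odd theta characteristics give honest, distinct tritangent planes—is the delicate part; the enumerative $120 = 2^{g-1}(2^g-1)$ is then immediate.
\end{proof}
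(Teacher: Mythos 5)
Your overall architecture matches the paper's: set up a bijection between tritangent planes and odd theta characteristics, then invoke the classical count $2^{g-1}(2^g-1)=120$ for $g=4$. The count is indeed the easy part and you handle it correctly. But both directions of your bijection have gaps exactly where the paper's proof does its real work, and in both places you defer rather than resolve.

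In the forward direction, you claim $h^0(C,D)=1$ because ``generically the three points impose independent conditions.'' The theorem is stated for \emph{every} smooth intersection of a smooth quadric and a smooth cubic, not for a generic one, and a genericity appeal gives no way to rule out $h^0(C,D)=2$ for the particular curve at hand. The paper closes this by combining the geometric Riemann--Roch theorem ($h^0=2$ if and only if the three contact points are collinear in the canonical embedding) with a concrete geometric fact: since the quadric $Q$ is smooth, $Q\cong\mathbb{P}^1\times\mathbb{P}^1$, and a plane section of $Q$ is a $(1,1)$-curve, so a plane containing a line of one ruling must also contain a line of the other ruling; such a plane then meets $C$ at points outside the support of $D$ and cannot be tritangent. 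This is precisely the step your proposal labels ``the delicate part'' without supplying it. In the converse direction you assert that $2D\simeq K_C$ ``means'' the plane $H$ through $p_1,p_2,p_3$ meets $C$ with multiplicity two at each point; that does not follow from the linear equivalence alone. The paper's argument is: write $H\cap C = p_1+p_2+p_3+q_1+q_2+q_3$; both this divisor and $2(p_1+p_2+p_3)$ are canonical, hence $q_1+q_2+q_3\simeq p_1+p_2+p_3$; and since $p_1+p_2+p_3$ is an odd theta characteristic, Clifford's theorem forces its rank to be zero, so the two effective divisors in this class coincide, giving $H\cap C = 2(p_1+p_2+p_3)$. Without this rank-zero step your converse is an assertion, not a proof. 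In short: right skeleton, correct enumeration, but the two geometric lemmas that constitute the actual content of the paper's proof are missing.
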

\begin{proof}
Let $D$ be a theta characteristic of $C$ obtained from a hyperplane section, and set $h \coloneqq \dim H^0(C,D)$. We will show that $h=1$. To begin with,  Clifford's theorem \cite[Chapter 3.1]{ACGH85} implies that $h$ is strictly smaller than $3$. As $D$ is obtained from intersecting a curve with a plane, it is effective, so $h$ also cannot be $0$. To see that $h$ cannot be $2$, recall the geometric version of the Riemann--Roch theorem: in the canonical embedding, the support of a divisor of degree $d$ with $h$ global sections spans a subspace of dimension $d-h$. It follows that $h=2$ if and only if the contact points are co-linear. To see that this is impossible, let $Q$ be the quadric containing $C$. Since $Q$ is smooth, it is isomorphic to $\mathbb{P}^1\times\mathbb{P}^1$. The intersection of $Q$ with a plane $H$ is a $(1,1)$-curve on $Q$. Therefore, if it contains a line of one of the rulings, it also contains a line in the other. In particular, $H$ intersects $C$ at points not in the support of $D$ and is therefore not a tritangent. 

On the other hand, given an odd theta characteristic $p_1+p_2+p_3$, a plane $H$ through $p_1,p_2,p_3$ intersects $C$ at a divisor of the form $p_1+p_2+p_3+q_1+q_2+q_3$ for some points $q_1,q_2,q_3$. Since $p_1+p_2+p_3+q_1+q_2+q_3$ and $2(p_1+p_2+p_3)$ are both canonical, we get an equivalence of divisors $p_1+p_2+p_3\simeq q_1+q_2+q_3$. The rank of $p_1+p_2+p_3$ is zero by Clifford's theorem and the fact that it is an odd theta characteristic. It follows that these divisors are, in fact, equal. We conclude that 
$p_1+p_2+p_3+q_1+q_2+q_3=2p_1+2p_2+2p_3.$
In other words, $H$ is tritangent to $C$ at $p_1,p_2,p_3$.

We conclude that tritangent planes are in bijection with the odd theta characteristics of $C$. It is well known \cite{Mum71} that the number of odd characteristics of a curve of genus $g$ is $2^{g-1}(2^g-1)$, which is $120$ in this case.
\qed
\end{proof}

A canonical sextic does have two classes of co-linear 
divisors of degree $3$. Indeed, those correspond to intersections of the sextic with the rulings of the ambient quadratic surface. However, as seen in the proof above,  such a divisor is never obtained as the intersection of a hyperplane with the curve. 

\begin{remark}\label{rmk:oneOneCurves}
A smooth quadratic surface in $\mathbb{P}^3$ is isomorphic to  $\mathbb{P}^1\times\mathbb{P}^1$ via the Segre embedding \cite[Lemma 3.31]{Liu02}. Under this isomorphism, the sextic corresponds to a curve of bi-degree $(3,3)$ on $\mathbb{P}^1\times\mathbb{P}^1$, and a tritangent plane corresponds to a tritangent $(1,1)$-curve. 
It follows that a $(3,3)$-curve on a quadratic surface has $120$ tritangent $(1,1)$-lines as well. This result can also be deduced directly, similarly to the proof of Theorem~\ref{thm:tritangentsAlgebraic}.
\end{remark}


Our initial interest in tritangent planes came from studying tritangent planes to Bring's curve, as part of the apprenticeship workshop at the Fields Institute \cite[Problem~4 on Curves]{Stu17}. 
Bring's curve is a space sextic, traditionally written in supernumerary coordinates by considering a special plane in $\mathbb{P}^4$.  In particular, it is the intersection of the quadric given by $x^2+y^2+z^2+t^2+u^2=0$ and the cubic given by $x^3+y^3+z^3+t^3+u^3=0$ in the plane $x+y+z+t+u=0$.

Edge~\cite{Edg81} found equations for all 120 tritangent planes to Bring's curve, which  appear in two flavors.  Type (i) tritangent planes are determined by three \emph{stalls} of the curve.  These are the points at which the osculating plane has order of contact higher than expected.  In this case, the general point on the curve has order 3 contact with the osculating plane, and the stall points have order 4.  Pl\"ucker's formulas for space curves tell us that there are exactly 60 stalls on Bring's curve.

Let $\alpha,\beta,\gamma$ be the three distinct roots of $\theta^3 + 2\theta^2 + 3\theta + 4=0$.  One can check that each of the equations $\gamma t - \beta u = 0$, $\alpha u - \gamma z = 0$, $\beta z - \alpha t = 0$
defines a plane which is tritangent to our curve and contains the tangent line at the stall point $(1,1,\alpha,\beta,\gamma)$.  The rest of the type (i) planes are given by replacing $(z,t,u)$ with any of the $60$ ordered triples in $\{x,y,z,t,u\}$. 
 Each of these contains the tangent line at a stall point given by an appropriate permutation of the coordinates of $[1:1:\alpha:\beta:\gamma]$. The construction yields three tritangent planes through each of the 60 stall points of the curve, and every plane contains three  stall points.  In other words, the containment relation between type (i) tritangents and stalls determines the edges of a bipartite graph $B(m,60)$ such that every vertex has valence $3$.  Thus $m=60$, so we see that there are $60$ such tritangent planes.
 

The type (ii) tritangent planes each contain exactly one stall point.  One of them is given by
\begin{equation*}
(\alpha -1)(\alpha +4)z + (\beta-1)(\beta+4)t + (\gamma-1)(\gamma+4)u = 0,
\end{equation*}
and the rest are obtained, again, by replacing $(z,t,u)$ with ordered triples in $\{x,y,z,t,u\}$.
This is summarized in the theorem.

\begin{theorem}[\cite{Edg81}]
Bring's curve has $60$ tritangent planes of type (i), and $60$ tritangent planes of type (ii) with equations as above.
\qed
\end{theorem}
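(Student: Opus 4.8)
The plan is to run everything off the $S_5$-symmetry of Bring's curve together with the count of $120$ tritangents furnished by Theorem~\ref{thm:tritangentsAlgebraic}. The symmetric group permutes the five homogeneous coordinates and fixes each of $\sum x_i$, $\sum x_i^2$, $\sum x_i^3$, hence acts on $C$ and permutes its $120$ tritangent planes. It therefore suffices to (i) check that one representative of each listed family is genuinely tritangent, (ii) compute the size of its $S_5$-orbit, and (iii) verify that the two orbits are disjoint and together exhaust all $120$ planes. The role of the explicit equations is thus to supply the representatives, and the role of Theorem~\ref{thm:tritangentsAlgebraic} is to spare us the verification that nothing is missed.

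I would begin by explaining why $\theta^3+2\theta^2+3\theta+4$ is the correct cubic. Writing $p_k=\alpha^k+\beta^k+\gamma^k$ for its roots, Newton's identities give $p_1=p_2=p_3=-2$, so the point $p_0=[1:1:\alpha:\beta:\gamma]$ satisfies $\sum x_i=2+p_1=0$, $\sum x_i^2=2+p_2=0$ and $\sum x_i^3=2+p_3=0$, and hence lies on $C$. Its $S_5$-orbit has stabilizer the transposition of the two equal coordinates, so it has $120/2=60$ elements; by Plücker's formula these $60$ points are exactly the stalls.

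For the type (i) representative $\gamma t-\beta u=0$ I would check that it contains the tangent line $T_{p_0}C$ and, likewise, the tangent lines at the two further stalls lying on it; since three contact points of multiplicity at least two already exhaust the degree-$6$ intersection $H\cap C$, each contact is automatically exactly two and the plane is tritangent. The incidence between these planes and the stalls is $3$-valent on both sides — three listed planes through each stall, three stalls on each plane — so double-counting the edges of $B(m,60)$ gives $3m=3\cdot 60$, whence $m=60$. For the type (ii) representative the same power-sum computation shows $p_0$ lies on it, since $\sum(\theta^3+3\theta^2-4\theta)=p_3+3p_2-4p_1=0$, and its $S_5$-orbit has exactly $60$ elements: the two vanishing coefficients pin down the pair $\{x,y\}$, while the three nonzero coefficients $(\alpha-1)(\alpha+4)$, $(\beta-1)(\beta+4)$, $(\gamma-1)(\gamma+4)$ are pairwise distinct — no root equals $1$, and the roots sum to $-2$ so no two sum to $-3$ — and thus pin down the remaining three coordinates, leaving only the transposition $(x\,y)$ in the stabilizer. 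A type (i) plane carries three stalls and a type (ii) plane only one, so the two families are disjoint; as $60+60=120$ matches Theorem~\ref{thm:tritangentsAlgebraic}, these are all the tritangents, and they have the stated equations.

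The main obstacle is the tritangency verification itself. For type (i) the delicate point is that ``$\gamma t-\beta u=0$ contains $T_{p_0}C$'' is not automatic: it amounts to finding a quadratic $g(s)=\nu(s-1)(s-\alpha)$ that also interpolates the prescribed values $g(\beta)=\gamma$ and $g(\gamma)=-\beta$, an overdetermined system that closes up only because of a polynomial identity modulo $\theta^3+2\theta^2+3\theta+4$. For type (ii) it is worse: containing the single stall and being tangent there only accounts for a double point, and one must still show that the residual degree-$4$ intersection is itself a doubled divisor $2(q+r)$ at two non-stall points. I expect this to require an explicit elimination — intersecting each representative plane with the quadric and the cubic and checking that the resulting degree-$6$ zero-dimensional scheme is everywhere non-reduced — carried out over $\mathbb{Q}[\theta]/(\theta^3+2\theta^2+3\theta+4)$ using the symmetric functions of the roots. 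This is the step that genuinely uses the special arithmetic of Bring's curve, rather than the soft symmetry and counting arguments that organize the rest.
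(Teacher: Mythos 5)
Your proposal takes essentially the same route as the paper's own treatment: representative planes propagated by the coordinate-permutation symmetry, the $3$-valent bipartite incidence graph $B(m,60)$ giving $m=60$ for type (i), the $60$ ordered triples (orbit of size $120/2$) for type (ii), and exhaustion of all $120$ tritangents via Theorem~\ref{thm:tritangentsAlgebraic}. The paper likewise defers the actual tangency verifications to \cite{Edg81} (``one can check\dots''), which is exactly the step you flag as the remaining elimination computation, so your write-up---with the added Newton's-identity checks that $[1:1:\alpha:\beta:\gamma]$ lies on the curve and on the type (ii) plane, and the explicit stabilizer computations---is a correct, fleshed-out version of the same argument.
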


\section{Tritangents to real space curves}
\label{sec:real}

In this section, we restrict ourselves to smooth curves that are defined over the real numbers with non-empty real part. As is known, the real part of a curve consists of a disjoint union of ovals, where by \emph{oval} we mean a simple closed loop. For a curve of genus $g$, the number of these ovals cannot exceed $g+1$. See \cite{Vir08,BCR98} for a nice introduction to real algebraic geometry.

We say that a tritangent plane is  \emph{real}  if it is defined over the reals, and \emph{totally-real} if in addition the tangency points are all real. For a real tritangent that is not totally-real, the tangency points consist of a real point and a pair of complex conjugate points. 
For a smooth sextic on a smooth quadric in $\mathbb{P}^3$,  real tritangent planes are in bijection with real odd theta characteristics.
Their number is given as follows. 

\begin{proposition}[\cite{Kra96}]\label{realTheta}
Let $C$ be a real curve of genus $g$, and assume that its real part $C(\mathbb{R})$ consists of $s> 0$ ovals. 
\begin{enumerate}
\item If $C(\mathbb{R})$ separates $C$, then there are $2^{g-1}(2^{s-1}+1)$ real even theta characteristics and $2^{g-1}(2^{s-1}-1)$ real odd ones.
\item If $C(\mathbb{R})$ doesn't separate $C$, then there are $2^{g+s-2}$ real even and $2^{g+s-2}$ real odd theta characteristics.
\end{enumerate}
\end{proposition}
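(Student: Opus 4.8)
The plan is to count real theta characteristics by parity, using the description of theta characteristics as quadratic forms over $\mathbb{F}_2$ together with the topology of the real Jacobian. Recall that the set of theta characteristics is a torsor under the group $J[2]:=\operatorname{Pic}^0(C)[2]\cong\mathbb{F}_2^{2g}$ of $2$-torsion points, which carries the non-degenerate alternating Weil pairing $\langle\,,\,\rangle$. By the Riemann--Mumford relation \cite{Mum71}, the parity function $\kappa\mapsto\dim H^0(C,\kappa)\bmod 2$ is a quadratic refinement of this pairing: fixing a base point turns it into a quadratic form $q\colon\mathbb{F}_2^{2g}\to\mathbb{F}_2$ whose polarization is $\langle\,,\,\rangle$, and the parity of $\kappa$ is read off from the value of $q$. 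The complex counts $2^{g-1}(2^g\pm1)$ are precisely the numbers of zeros and ones of $q$, as governed by its Arf invariant.

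Real theta characteristics are the fixed points of the anti-holomorphic involution $\sigma$ acting on this torsor. First I would check that a real theta characteristic exists, so that the fixed locus is itself a torsor, now under the real $2$-torsion $J(\mathbb{R})[2]$. Its cardinality is computed from the topology of the real Jacobian: by the Comessatti--Gross--Harris description, when $C(\mathbb{R})$ has $s>0$ ovals one has $J(\mathbb{R})^0\cong(\mathbb{R}/\mathbb{Z})^g$ and $\pi_0\big(J(\mathbb{R})\big)\cong\mathbb{F}_2^{\,s-1}$, whence $|J(\mathbb{R})[2]|=2^{g}\cdot 2^{s-1}=2^{g+s-1}$. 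This already yields the total number $2^{g+s-1}$ of real theta characteristics, matching the sum of the two counts in each case.

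The parity split is the heart of the matter. Restricting $q$ to the affine space of real theta characteristics gives a quadratic function whose associated bilinear form is the Weil pairing restricted to $J(\mathbb{R})[2]$. I would analyze this restricted form through its radical $R$ and the behaviour of $q$ on $R$: if $q|_R$ is a non-trivial linear functional then the zeros and ones of $q$ balance exactly, whereas if $q|_R\equiv 0$ then $q$ descends to the non-degenerate quotient and the counts differ by $\pm 2^{k}$, where $2k=\dim J(\mathbb{R})[2]-\dim R$, the sign being the Arf invariant. A direct computation then shows that the separating case corresponds to $q|_R\equiv 0$ with $\dim R=g-s+1$ and trivial Arf invariant on the quotient, giving $2^{g-1}(2^{s-1}+1)$ even and $2^{g-1}(2^{s-1}-1)$ odd; the non-separating case corresponds to a non-trivial $q|_R$, forcing the balanced count $2^{g+s-2}$ for each parity.

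The main obstacle is this final identification: relating the topological type of $C(\mathbb{R})$ to the precise structure of the restricted quadratic form. Concretely, one must express the real $2$-torsion classes in terms of the $\mathbb{F}_2$-homology of $C(\mathbb{R})$ and its ovals, and then evaluate both the Weil pairing and the parity function on them. The dividing/non-dividing dichotomy enters exactly here: when $C(\mathbb{R})$ separates $C(\mathbb{C})$ the ovals bound, producing homological relations that force $q$ to vanish on the radical, while in the non-separating case a vanishing cycle survives on which $q$ takes the value $1$. Making this evaluation rigorous --- and handling the subtle distinction between the real Picard group and the group of real line bundles that actually carries the torsor structure --- is where the real care is required.
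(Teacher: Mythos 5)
The paper itself offers no proof of this proposition: it is imported wholesale from Krasnov \cite{Kra96} (the counts also appear in Gross--Harris's work on real curves) and used as a black box in Section 3. So the only question is whether your outline stands on its own, and as written it does not. The framework you set up is the standard and correct one: real theta characteristics, once one exists, form a torsor under $J(\mathbb{R})[2]$; the size $2^{g+s-1}$ of that group follows from Comessatti's description of $J(\mathbb{R})$ as $2^{s-1}$ copies of a $g$-torus; and the parity count reduces to analyzing a quadratic function on this affine $\mathbb{F}_2$-space whose associated bilinear form is the restricted Weil pairing. Your numerology is also self-consistent: $\dim R = g-s+1$ with trivial Arf invariant in the dividing case, and $q$ non-vanishing on the radical in the non-dividing case, do reproduce the stated counts. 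But the proposal defers exactly the step that \emph{is} the theorem. You never compute the radical $R$ of the restricted pairing, never prove that the parity function is constant on $R$-cosets precisely when $C(\mathbb{R})$ separates $C$, and never compute the Arf invariant of the descended form; ``a direct computation then shows'' is asserted, and your closing paragraph concedes this identification is the unresolved obstacle. Everything before that point is formal linear algebra over $\mathbb{F}_2$; the content of the proposition is the translation of the dividing/non-dividing dichotomy into the structure of the pair $(q,R)$, e.g.\ via an equivariant identification $J[2]\cong H_1(C;\mathbb{F}_2)$ under which the classes of the ovals span the relevant subspaces. Likewise, the existence of a real theta characteristic, without which the fixed locus is not a torsor and even the total count $2^{g+s-1}$ is unjustified, is flagged (``first I would check'') but never argued.

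Two further slips. Your difference formula is wrong as stated: if $q$ vanishes on $R$ and descends to a nondegenerate form on a quotient of dimension $2k$, the even and odd counts differ by $2^{\dim R + k}$, not $2^{k}$, since each point of the quotient has $2^{\dim R}$ preimages of equal parity. With $\dim R = g-s+1$ and $k=s-1$ this gives $2^{g}$, which is what the claimed counts require; your own formula would give $2^{s-1}$ and contradict them except for M-curves. Second, the equality $J[2]^{\sigma}=J(\mathbb{R})[2]$ and the fact that every one of the $2^{s-1}$ components of $J(\mathbb{R})$ contains exactly $2^{g}$ points of order two each need a line of justification (the latter holds because the component group is $2$-torsion, so multiplication by $2$ maps every component onto the identity component); both are true, but neither is automatic.
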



A real curve of genus $g$ with exactly $g+1$ ovals is referred to as an \emph{M-curve}. Any disjoint union of $g+1$ cycles on a Riemann surface of genus $g$  separates the surface, so  an M-curve always corresponds to the first case of Proposition~\ref{realTheta}. In particular, a canonical space sextic with $5$ ovals has $120$ real tritangent planes.  The question remains, how many of them are totally-real?

In \cite{Emc28}, Emch claimed that the tritangents of a real space sextic with 5 ovals are all totally-real, and constructed an example of such a curve and its tritangents. However, as we will see below, several of the planes were over counted, and only $108$ of its tritangents are totally-real.  We are not aware of any previous literature that has addressed this issue.

We begin by considering a union of three lines in $\PP^2$ so that they bound an equilateral triangle with incenter at the origin.  For instance, if we choose one line to be of the form $x=a$, we find that the other lines should have slope $\frac{\pm 1}{\sqrt 3}$.  Choosing $a=-\sqrt{3}$ yields
$p(x,y) = (x+\sqrt{3})(x-y\sqrt{3}-3)(x+y\sqrt{3}-3)$
and $V(p(x,y)) \subset \mathbb{A}^2$ is our union of lines. 

\begin{figure}
\begin{minipage}{.5\textwidth}
\centering
\includegraphics[width=0.8\linewidth]{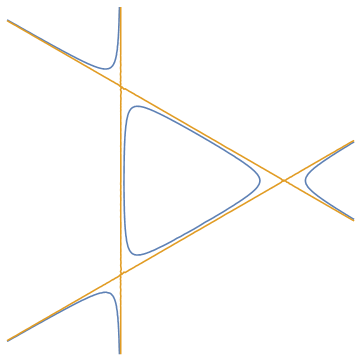}
\caption{Union of lines and a smooth cubic.}
\label{fig:cubicLines}
\end{minipage}
\begin{minipage}{.5\textwidth}
\centering
\begin{overpic}[width=0.77\textwidth,trim=30mm 32mm 30mm 23mm,clip]{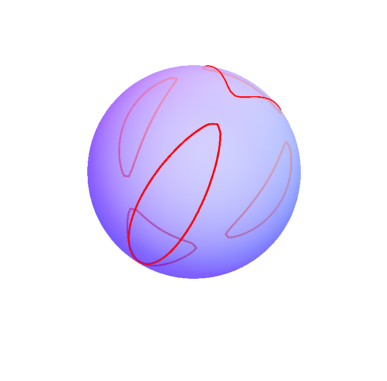}  
  \put (40,50) {\large$O_1$}
  \put (80,40) {$O_2$}
  \put (20,70){$O_3$}
  \put (70,80) {$N$}
  \put (50,20) {\small$S$}
\end{overpic}
\caption[Fig5ovals]{Real sextic with five ovals.}
\label{overpicfig}
\end{minipage}
\end{figure}


The set of points $\{(x,y) \;|\; p(x,y)=2\}$ is a smooth cubic curve with four real branches (one of which is an oval bounded by our triangle).  The polynomial $c(x,y,z):=p(x,y)-2$ has zeros along a cubic cylinder in $\mathbb{A}^3$.  A sphere centered at the origin of sufficiently large radius meets each of the components of the cubic, and it meets the central component twice.  Therefore, the intersection of the cubic surface with a sphere yields a space sextic with five ovals.
We refer to the top and bottom ovals as $N$ and $S$, and the other three as $O_1,O_2,O_3$.

\begin{theorem}\label{only108}
The real sextic space curve determined by $c(x,y,z)=0$ and \linebreak $x^2+y^2+z^2=25$ has 108 totally-real tritangent planes.
\end{theorem}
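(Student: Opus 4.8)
The plan is to reduce the count to a census of odd theta characteristics, separated according to the reality of their three contact points. First I would record that the sphere $Q=V(x^2+y^2+z^2-25)$ is a smooth quadric and that $C=Q\cap V(c)$ is a smooth curve lying on it; since $C$ is the intersection of $Q$ with the cubic cylinder $V(c)$, it is a curve of bidegree $(3,3)$ on $Q\cong\mathbb{P}^1\times\mathbb{P}^1$, hence a canonical sextic of genus $4$, and by Theorem~\ref{thm:tritangentsAlgebraic} (compare Remark~\ref{rmk:oneOneCurves}) its tritangent planes are in bijection with its $120$ odd theta characteristics. (The cylinder's lone singular point $[0:0:1:0]$ at infinity does not lie on $Q$, so this causes no trouble for the smoothness of $C$.) Because $C$ is real with the five ovals $N,S,O_1,O_2,O_3$, it is an $M$-curve; as $g+1$ disjoint ovals always separate a genus-$g$ surface, $C(\mathbb{R})$ separates $C$, and Proposition~\ref{realTheta}(1) applies with $g=4$, $s=5$, giving $2^{3}(2^{4}-1)=120$ real odd theta characteristics. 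Hence all $120$ tritangent planes are real.

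Next I would refine this to the totally-real count. Since every odd theta characteristic has $h^0=1$ (Theorem~\ref{thm:tritangentsAlgebraic}), it is represented by a unique effective divisor $D$ of degree $3$; as the class is real and its effective representative is unique, $D$ is invariant under complex conjugation, so its support is either three real points or one real point together with a complex-conjugate pair. The plane is totally-real exactly in the first case, and a real tritangent fails to be totally-real exactly in the second. So the theorem amounts to showing that precisely $12$ of the $120$ characteristics carry a conjugate pair in their support.

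To organize this I would use the symmetry group $G$ of the configuration: the equilateral triangle supplies a dihedral group of order $6$ permuting $O_1,O_2,O_3$ and fixing $N,S$, while the reflection $z\mapsto -z$ swaps $N$ and $S$, so $G\cong D_3\times\mathbb{Z}/2$ has order $12$ and acts on the $120$ tritangents. I would then classify the tritangents by how their contact points distribute among the five ovals and solve the tangency conditions orbit by orbit; concretely this means writing a candidate plane, imposing that its intersection with $C$ equals $2(p_1+p_2+p_3)$, and—using the ruling of $Q$ as in the proof of Theorem~\ref{thm:tritangentsAlgebraic} to eliminate a variable—reading off whether the three contact points are real. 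I expect the outcome to be that a single free $G$-orbit of size $12$ consists of planes meeting $C$ at one real point and a conjugate pair, with the remaining $108$ planes tangent at three real points; this is exactly the orbit Emch implicitly treated as totally-real.

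The hard part is this last step. The reality of the contact points is not dictated by the topological type of $C$ (only the total of $120$ real tritangents is), so deciding which twelve fail requires an actual—ideally certified—solution of the tangency equations for each symmetry class rather than a purely combinatorial argument. Controlling the bookkeeping across orbits, and ruling out that any further planes degenerate to a conjugate-pair contact, is where the real work and the correction to Emch's count lie.
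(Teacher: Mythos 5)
Your setup is correct and matches the paper's framing: $C$ is a smooth canonical sextic of genus $4$, it is an M-curve, so Proposition~\ref{realTheta}(1) with $g=4$, $s=5$ makes all $120$ tritangent planes real; and since each odd theta characteristic has a unique effective representative, that divisor is conjugation-invariant, so each real tritangent touches $C$ either at three real points or at one real point and a conjugate pair. The theorem is thus equivalent to showing that exactly $12$ of the $120$ characteristics carry a conjugate pair. But this is precisely where your proposal stops being a proof: the entire enumerative content --- deciding which planes have all-real contact points --- is deferred to ``an actual---ideally certified---solution of the tangency equations,'' which you never carry out, and the claim that the failures form a single free orbit of the order-$12$ symmetry group is stated as an expectation, not demonstrated. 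As you yourself observe, the topological type of $C$ only determines the number of \emph{real} tritangents, never the totally-real ones, so nothing you have written pins down the number $108$; the reduction is sound, but the theorem's actual content is missing.

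The paper fills this gap by a direct geometric enumeration that never needs to identify the $12$ failing planes at all. It classifies the totally-real tritangents by how their contact points distribute over the five ovals $N,S,O_1,O_2,O_3$: the $(1,1,1)$-tritangents (for each of the $\binom{5}{3}=10$ triples of ovals, each of the $2^3=8$ separation classes contains exactly one tritangent, giving $80$), the $(2,1)$-tritangents (obtained by rotating a plane bitangent to one oval until it touches another: the ones doubly tangent to some $O_i$ together with the $9+9=18$ doubly tangent to $N$ or $S$, giving $24$ in all), and the $(3)$-tritangents (the planes through the three height-maxima, respectively height-minima, of $N$ and of $S$, giving $4$), for a total of $80+24+4=108$. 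Completeness within each type is argued from the reflectional symmetries of the ovals: a generic point of $O_i$ admits exactly two bitangent planes, given by its two reflections, and these are all accounted for, while a bitangent cutting a circle inside $O_i$ can never reach another oval. If you want to rescue your orbit-by-orbit approach, you must actually set up and certify the tangency equations for each symmetry class --- the ``real work'' you acknowledge omitting; without it, your proposal is a correct reduction plus a conjecture about the answer, not a proof of it.
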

\begin{proof}
We break the proof into parts based on the type of tritangent plane.  There are tritangents which touch three distinct ovals, tritangents which touch an oval twice and another oval once, and tritangents which touch a single oval three times.  We label them $(1,1,1)$, $(2,1)$, and $(3)$\textnormal{-}tritangents respectively.

\begin{description}
\item[80 $(1,1,1)\textnormal{-}$tritangents.] %
Given any three ovals of the curve, there exist $2^3 =8$ classes of planes which separate them.  That is, a given plane has some of the ovals ``above'' it, and some ``below''.
Such a plane can be moved so that it touches the three ovals each at one point in a unique way (For an analogue of this in the plane, consider two general non-concentric ellipses and their four bitangents.)
This yields $8\binom{5}{3}=80$ tritangents, and there are clearly no other tritangent planes meeting each of three ovals once.

\item[12+18 $(2,1)\textnormal{-}$tritangents.]
For each $O_i$, there are four tritangents which touch it twice.  To find them,  
consider the plane tangent to $O_i$ at
its northernmost point and southernmost point.  
The plane can be rotated so that it keeps two tangency points to $O_i$.  As it rotates, it meets the other two $O_j$ each once yielding two tritangents for a total of $3\cdot 2=6$ such tritangents.  The projection of these two tritangents to the $xy$-plane is pictured in Figure~\ref{fig:ovalsprojected}.

\begin{figure}
\centering
\includegraphics[width=0.52\linewidth,trim=0mm 0mm 0mm 0mm,clip]{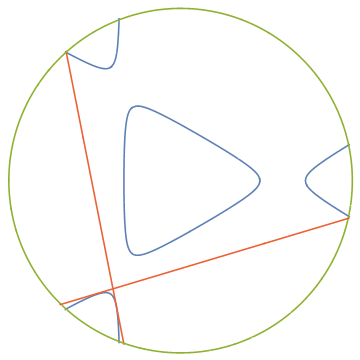}
\caption{Two $(2,1)$-tritangents projected to $xy$-plane.}
\label{fig:ovalsprojected}
\end{figure}

Similarly, for $N$ (resp. $S$), there are nine tritangents which touch it twice.  To see them, pick a side of the triangle of $N$ (resp. $S$), and consider the opposing $O_i$.  There is a tritangent which touches $N$ (resp. $S$) at two points along this side and touches $O_i$ at its northernmost point, and similarly one which touches the $O_i$'s southernmost point.  Finally, there is a tritangent which touches the opposing point of $S$ (resp. $N$).  This yields $9\cdot 2 = 18$ tritangents.

Clearly there are no additional $(2,1)$-tritangents meeting $N,S$ twice.  
We now show there are no additional $(2,1)$-tritangents meeting $O_i$ twice.
Observe that the oval $O_i$ has two reflectional symmmetries, one through the ``equator'' and one through the great circle determined by the northernmost and southernmost points.  If $p \in O_i$ is a point which is not fixed by either reflection, then the images under reflection -- denoted $p'$ and $p''$ each share a tangent plane to $O_i$ with $p$, that is, the tangent line $T_p O_i$ to $O_i$ at $p$ intersects $T_{p'} O_i$ and $T_{p''} O_i$.  It is tedious but not difficult to check that the two planes determined by these lines are the only bitangent planes to $O_i$ at $p$.
If $p'$ is given by reflecting $p$ through the $xy$-plane, then the corresponding bitangent is a tritangent only if the projection (Figure~\ref{fig:ovalsprojected}) is a bitangent line, and it is apparent from the picture that we have already claimed all these.  If $p''$ is the other reflection, then the bitangent to $p$ and $p''$ cuts out a circle on the sphere which is contained in $O_i$.  Therefore it cannot be tangent at a point on another oval.

\item[4 $(3)\textnormal{-}$tritangents.] %
$N$ has three maxima with respect to height in the $z$-direction. 
There is a plane which touches the oval at these three points. Similarly, it has three minima, and there is another tritangent plane there.  The same is true for $S$. We thus have $2\cdot 2=4$ four more tritangent planes.

It is easy to see that there are no more $(3)$-tritangent planes meeting $N$ or $S$ only. A tritangent plane also cannot meet $O_i$ only as, by symmetry, such a plane would have to touch $O_i$ at either its northernmost or southernmost point, but there are not other points sharing a bitangent with either of these.
\qed
\end{description}
\end{proof}

We have shown that Emch's curve has fewer totally-real tritangents than was previously thought.  A natural question is thus reopened.
\begin{question}
Does there exist a canonically embedded real space sextic with $120$ totally-real tritangent planes? 
\end{question}
 

\begin{figure}
\centering
\begin{minipage}{.48\textwidth}
\label{fig:cubicIntersectSphere}
\centering
\includegraphics[width=0.77\linewidth,trim = 20mm 19mm 15mm 15mm, clip]{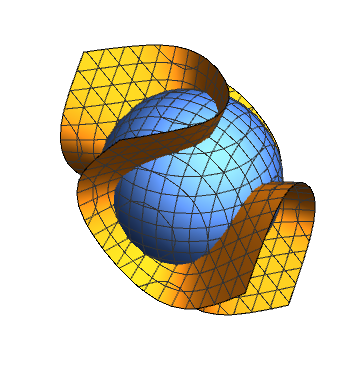}
\caption{The intersection of a cubic and a quadric yields a sextic.}
\end{minipage}
\hfill
\begin{minipage}{.48\textwidth}
\centering
\includegraphics[width=0.8\linewidth,trim = 27mm 32mm 27mm 37mm, clip]{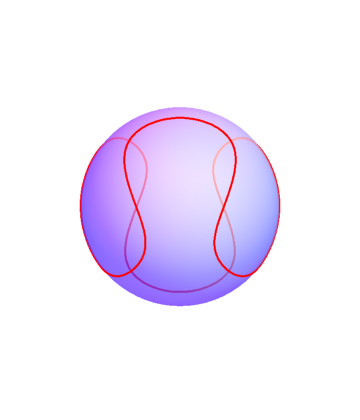}
\caption{A real sextic curve with a single connected component.}
\label{fig:connRealSextic}
\end{minipage}
\end{figure}

We now consider a curve with significantly fewer totally-real tritangent planes. Let $C$ be the sextic determined by the intersection of the sphere $S_2$ defined by 
\[ x^2 + y^2 + z^2 = 1 \]
with the ``Clebsch diagonal cubic'' $S_3$ defined by
\begin{align*} & 81(x^3 + y^3 + z^3) - 
  189(x^2y + x^2z + xy^2 + y^2z + xz^2 + yz^2) + 54xyz \\
  & + 
  126(xy + xz + yz) - 9(x^2 + y^2 + z^2) - 9(x + y + z) + 1=0.
  \end{align*}
This cubic surface has a threefold rotational symmetry about the axis $x=y=z$.
In Figure~\ref{fig:connRealSextic}, this corresponds to the $\frac{2\pi}{3}$ rotation about the ``north/south poles'', which we will call $p_N$ and $p_S$. Proposition~\ref{realTheta} implies that since $C$ has real points, it has eight real tritangent planes. As the following theorem shows, these planes exist and are all totally-real. 

\begin{theorem}
The curve $C$ has exactly 8 totally-real tritangent planes.
\end{theorem}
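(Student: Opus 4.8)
The plan is to combine the exact count of real odd theta characteristics from Proposition~\ref{realTheta} with an explicit, symmetry-organized construction of totally-real tritangent planes. Since $C$ is a genus-$4$ curve whose real locus is a single oval ($s=1$), Proposition~\ref{realTheta} tells us that the number of real odd theta characteristics — equivalently, by the bijection recorded above between real tritangents and real odd theta characteristics — is either $0$ (if $C(\mathbb{R})$ separates $C$) or $2^{g+s-2}=2^{3}=8$ (if it does not). Because totally-real tritangents are in particular real, it therefore suffices to exhibit $8$ distinct totally-real tritangent planes: doing so simultaneously rules out the separating case, forces the total count of real tritangents to be exactly $8$, and shows that every real tritangent is in fact totally-real.

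To produce these eight planes I would exploit the symmetry of $C$: the cubic equation defining $S_3$ is invariant under all permutations of $x,y,z$, and so is the sphere $x^2+y^2+z^2=1$, so the symmetric group $\mathfrak{S}_3$ acts on $C$ by real automorphisms. This group permutes the real tritangents and preserves total-realness, so the eight planes break into $\mathfrak{S}_3$-orbits, and I expect the pattern $8 = 1+1+3+3$. The two fixed planes should be the \emph{polar} ones, orthogonal to the axis $x=y=z$, i.e.\ of the form $x+y+z=c$; imposing that such a plane meet $C$ in a divisor $2(p_1+p_2+p_3)$ with $\{p_1,p_2,p_3\}$ a single $\mathbb{Z}/3$-orbit reduces, by the rotational symmetry, to a tangency condition at one point together with an equation in $c$, whose two real roots (near $p_N$ and $p_S$) give two planes whose tangency points are real by construction. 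The two orbits of size $3$ should be the \emph{lateral} planes, each stabilized by a transposition $\sigma$; such a plane has an equation symmetric in two of the coordinates and is tangent to $C$ along a $\sigma$-invariant triple consisting of one point on the mirror plane together with a $\sigma$-conjugate pair — all three points real, hence the plane is totally-real. Constructing one representative of each orbit and spreading it by the group yields the remaining $3+3$ planes.

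The main obstacle is the explicit tangency computation. Even after the symmetry reduction, one must verify that each candidate plane meets the sextic in a divisor genuinely of the form $2D$ (rather than being merely bitangent, or meeting $C$ with the wrong contact pattern), solve the resulting polynomial conditions over $\mathbb{R}$, and confirm both that the tangency points are really real and that the eight planes are pairwise distinct; this requires a careful and likely computer-assisted elimination (resultants or Gröbner bases, or a certified numerical computation). One must also check that the symmetry reduction has not concealed solutions — that the polar tangency equation has exactly the two expected real roots, and that each lateral orbit is complete and disjoint from the polar planes. Once these verifications are in place, the eight constructed totally-real tritangents exhaust the at-most-eight real tritangents guaranteed by Proposition~\ref{realTheta}, and the theorem follows.
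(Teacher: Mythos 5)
Your overall strategy coincides with the paper's: use Proposition~\ref{realTheta} (via the bijection between real tritangents and real odd theta characteristics) to cap the count at eight, then exhibit eight totally-real tritangents by exploiting the order-six symmetry of the curve. Your bookkeeping for the separating case is in fact slightly cleaner than the paper's own phrasing, and your predicted orbit structure $8 = 1+1+3+3$ --- two ``polar'' planes invariant under the whole group, plus two size-three orbits of ``lateral'' planes each fixed by a reflection --- is exactly what the paper's construction produces: two circles centered on the axis $x=y=z$ near the poles, and six more circles, one through each tangency point of the two polar circles.

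The genuine gap is that you never establish the existence of any of these planes: you explicitly defer the tangency verification to an unspecified ``computer-assisted elimination,'' so what you have is a plan for a computation rather than a proof. The idea you are missing is that no computation is needed, because the planes can be produced by a soft geometric argument. For the polar planes: let $q$ be a point of $C$ minimizing the distance to the pole $p_N$; the circle in which the sphere of radius $d(p_N,q)$ about $p_N$ meets $S_2$ then touches $C$ at $q$ without crossing it, hence by the threefold rotational symmetry it touches $C$ at at least three distinct points; since a plane meets the sextic in at most $3\cdot 2=6$ points counted with multiplicity, these are exactly three tangencies, so the plane is tritangent with all contact points real by construction. For the six lateral planes the paper uses a continuity (intermediate-value) argument: among circles on $S_2$ tangent to $C$ at a fixed point $q$, sufficiently small ones meet $C$ nowhere else while sufficiently large ones cross $C$ transversely, so on the side of $C$ opposite the polar circle there is a circle that touches $C$ at a second point without crossing it; the reflectional symmetry through the plane of $p_N$, $p_S$, $q$ together with the same degree bound then forces exactly three real tangencies. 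This qualitative argument replaces your Gr\"obner/resultant step entirely; without it (or an actually performed, certified computation), your proposal does not yet prove the theorem.
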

\begin{proof}
Let $q$ denote a point on $C$ which minimizes the distance to $p_N$. 
This point lies on a circle in $\RR^3$ which is the intersection of $S_2$ with a sphere centered at $p_N$ of radius $d(p_N, q)$.  
By the threefold rotational symmetry, there are at least three distinct points at which $C$ 
touches the circle.  Thus, the plane containing this circle is tritangent to $C$ and there are exactly three tangency points.  The same argument for $p_S$ gives a second tritangent plane to $C$.

Observe that $C$ has a reflectional symmetry through the plane determined by $p_N$, $p_S$, and $q$.
The three points associated to any tritangent to $C$ must lie on a circle on $S_2$ which cannot cross $C$.  By the reflectional symmetry of $C$, this circle must meet $C$ at one of the points $q$ from the two known tritangents. 
Such a circle, of a sufficiently small radius, meets $C$ at no other points.  With a sufficiently large radius, it meets $C$ transversely at multiple points.  Thus, there are circles on either side of $C$, which touch but don't cross the curve at some point other than $q$.  Since the total intersection number of the two curves cannot exceed $3 \cdot 2 = 6$, and by the reflectional symmetry, this circle is tangent at exactly three points.
Hence, we find one additional circle for each of the six points $q$, for a total of 8 tritangents planes.
By Proposition~\ref{thm:tritangentsAlgebraic}, this is the maximum possible.
\qed
\end{proof}

\section{Tropical space sextics}
\label{sec:trop}
In this section, we consider tropical space sextics of genus $4$, and show that they satisfy enumerative properties that are analogous to the ones presented above. 
We begin with a brief overview of several topics regarding tropical curves. We mostly focus on notions that are necessary for defining and studying tropical tritangent planes. The interested reader may find a more thorough treatment in \cite{MS15}.

\begin{definition}
A  tropical curve is a metric  graph $\Gamma$ embedded in $\mathbb{R}^n$, together with an integer weight function on the edges, such that
\begin{itemize}
\item The direction vector of each edge is rational.
\item At each vertex, the weighted sum of the primitive integral vectors of the edges
around the vertex is zero.
\end{itemize}
The \emph{genus} of a tropical curve is the first Betti number $\dim H^1(\Gamma,\mathbb{Z})$ of the graph. 
\end{definition}
We  assume throughout that the weights on the edges are all one.

\begin{definition}
A tropical curve is of \emph{degree} $d$ if it has $d$ infinite ends in each of the directions $-e_1,-e_2,\ldots,-e_n, e_1+e_2+\cdots+e_n$. 
A plane curve is of \emph{bi-degree} $(d_1,d_2)$ if it has $d_1$ ends in each of the directions $e_2,-e_2$, and $d_2$ ends in the directions $e_1,-e_1$.
\end{definition}


\begin{example}
\label{Ex:elliptic}
The graph in Figure~\ref{fig:elliptic} is a tropical plane curve of degree $3$ and genus~$1$. 

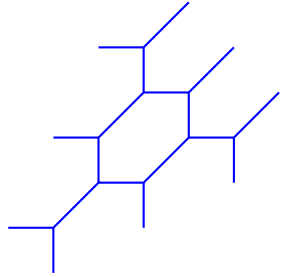
\begin{figure}[h]
\centering
\begin{tikzpicture}[scale=.6]
\draw[thick, blue] (0,0)--(1,0)--(2,1)--(2,2)--(1,2)--(0,1)--(0,0);
\draw[thick, blue] (0,0) to (-1,-1);
\draw[thick, blue] (-1,-1) to (-2,-1);
\draw[thick, blue] (-1,-1) to (-1,-2);
\draw[thick, blue] (0,1) to (-1,1);
\draw[thick, blue] (1,2) to (1,3);
\draw[thick, blue] (1,3) to (0,3);
\draw[thick, blue] (1,3) to (2,4);
\draw[thick, blue] (2,2) to (3,3);
\draw[thick, blue] (2,1) to (3,1);
\draw[thick, blue] (3,1) to (4,2);
\draw[thick, blue] (3,1) to (3,0);
\draw[thick, blue] (1,0) to (1,-1);



\end{tikzpicture}
\caption{A tropical elliptic curve.}
\label{fig:elliptic}
\end{figure}
\end{example}

\begin{definition}
A \emph{tropical   plane} in $\mathbb{R}^3$ is a two dimensional polyhedral complex with a unique vertex $v$, whose $1$-skeleton consists of the rays\linebreak $v+\mathbb{R}_{\geq 0} (-e_1), v+\mathbb{R}_{\geq 0} (-e_2), v+\mathbb{R}_{\geq 0} (-e_3)$, and  $v+\mathbb{R}_{\geq 0} (e_1+e_2+e_3)$. The maximal faces are the cones generated by each pair of rays. In other words, it is a translation of the $2$-skeleton of the fan of the toric variety $\mathbb{P}^3$.
\end{definition}

More generally, a \emph{tropical variety} is a balanced polyhedral complex in $\mathbb{R}^n$, in the sense of \cite[Definition 3.3.1]{MS15}. Tropical hypersurfaces (namely tropical varieties of codimension $1$) are simply constructed by taking the dual complex of a subdivision of a polytope with integer vertices. The hypersurface is \emph{tropically smooth} if the subdivision is a unimodular triangulation. 

For the rest of this section, we assume that $\Bbbk$ is an algebraically closed field, endowed with a non-trivial non-archimedean valuation $\nu$. 
For simplicity, we may choose $\Bbbk$ to be the field of Puiseux series over $\mathbb{C}$, consisting of all elements of the form
\[
x = a_k\cdot t^{\frac{k}{n}} + a_{k+1}\cdot t^{\frac{k+1}{n}} +\cdots
\]
for all choices of $k\in\mathbb{Z}, n\in\mathbb{N}$, and  coefficients $a_i\in\mathbb{C}$. The valuation is given by $\nu(x)=\frac{k}{n}$.

Let  $X$ be a variety  in $(\Bbbk^*)^n$. The \emph{tropicalization} map $\text{trop}:X(K)\to\mathbb{R}^n$ is defined by
\[
\text{trop}(x_1,x_2,\ldots,x_n) = (-\nu(x_1),-\nu(x_2),\ldots,-\nu(x_n)).
\]
The tropicalization of $X$ is the closure in $\mathbb{R}^n$ of $\text{trop}(X(\Bbbk))$.

The reader will be relieved to know that the tropicalization of a variety is, indeed, a tropical variety of the same dimension. The tropicalization of a generic curve of degree $d$ (resp. bi-degree $(d_1,d_2)$) is a tropical curve of degree $d$ (resp. bi-degree $(d_1,d_2)$). Similarly, the tropicalization of a plane in $(\Bbbk^*)^3$ is a tropical plane in $\mathbb{R}^3$. 

\begin{example}
The tropicalization of the degree $3$ plane curve
\[
f = t + x + y + xy + t\cdot x^2 + t\cdot y^2 + t^2\cdot x^2y + t^2\cdot xy^2 + t^4\cdot x^3 + t^4\cdot y^3
\]
is the tropical curve of degree $3$ appearing in Figure~\ref{fig:elliptic}.
The tropicalization of the curve of bi-degree $(1,2)$
\[
g = 1 + x + y + t\cdot xy + t^3\cdot xy^2 + t^3\cdot y^2
\]
is the tropical curve of bi-degree $(1,2)$ depicted in Figure~\ref{fig:oneTwoCurve}.
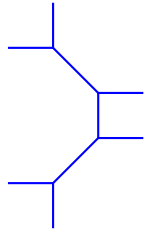
\begin{figure}
\centering
\begin{tikzpicture}[scale=.6]
\draw[thick, blue] (-1,0)--(0,0)--(1,1)--(1,2)--(0,3)--(-1,3);
\draw[thick, blue] (0,0)--(0,-1);
\draw[thick, blue] (0,3)--(0,4);
\draw[thick, blue] (1,1)--(2,1);
\draw[thick, blue] (1,2)--(2,2);


\end{tikzpicture}
\caption{A tropical $\mathbb{P}^1\times\mathbb{P}^1$ curve of bi-degree (1,2).}
\label{fig:oneTwoCurve}
\end{figure}

\end{example}

A common difficulty in tropical geometry is that occasionally, tropical varieties intersect at higher than expected dimension. This can happen even when the tropical varieties in question are tropicalizations of algebraic varieties that do intersect in the expected dimension. One way to address this issue is to  ``force'' the varieties to intersect properly, using the so-called  \emph{stable intersection}. Colloquially, this is done by generically perturbing the tropical varieties, and taking the limit of their intersection as the perturbations tend to zero. 

More precisely, whenever cells $\sigma_1,\sigma_2$ of tropical varieties $\Sigma_1,\Sigma_2$ span  $\mathbb{R}^n$, their set-theoretic intersection will be a cell in the stable intersection. To assign a weight to this cell, consider the lattices $N_1,N_2$ obtained by intersection $\sigma_1,\sigma_2$ with the lattice $\mathbb{Z}^n$ in $\mathbb{R}^n$. The weight given to the cell is 
\[
m(\sigma_1)\cdot m(\sigma_2)\cdot [\mathbb{Z}^n:N_1+N_2],
\]
where $m(\sigma_i)$ is the weight of each cell $\sigma_i$. 


\begin{example}\label{ex:stableIntersection}
Consider the two curves depicted in Figure~\ref{fig:intersectionExample}. A basis for the ray of the blue curve at the intersection point is given by the primitive vector $(1,1)$, whereas a basis for the ray of the red curve is given by $(1,-1)$. The multiplicity of the point of intersection is therefore 
\[
\det\begin{pmatrix}
  1 & 1 \\
  1 & -1 \\
 \end{pmatrix} = 2.
 \]
This is consistent with tropical Bezout's theorem, since this is an intersection of a line with a $(1,1)$-curve.
 
\begin{figure}
\begin{minipage}{0.48\textwidth}
\centering
\begin{tikzpicture}[scale=0.8]
\draw[blue] (-1,0)--(0,0)--(0,-1);
\draw[blue] (0,0)--(2,2);

\draw[red] (-1,1)--(0,1)--(0,2);
\draw[red] (0,1)--(1,0);
\draw[red] (1,-1)--(1,0)--(2,0);

\draw[fill,purple] (.5,.5) circle [radius=0.1];

\end{tikzpicture}
\caption{Two tropical curves meeting with multiplicity $2$.}
\label{fig:intersectionExample}
\end{minipage} %
\hfill %
\begin{minipage}{0.48\textwidth}
\centering
\begin{tikzpicture}[scale=0.8]
\draw (0,1.5);
\draw[blue] (-1,-1)--(0,0)--(0,1);
\draw[blue] (0,0)--(2,0);
\draw[blue] (3,-1)--(2,0)--(2,1);

\draw[red] (-1,0.07)--(3,0.07);
\draw[fill,purple] (0,0) circle [radius=0.1];
\draw[fill,purple] (2,0) circle [radius=0.1];

\end{tikzpicture}
\vspace{0.4cm}
\caption{Tropical curves intersecting stably at two points.}
\label{stupid}
\end{minipage}
\end{figure}

The two curves in Figure~\ref{stupid} don't  intersect properly. However, by slightly perturbing the red curve, the two curves intersect at two points with multiplicity $1$. Taking the limit as the red curve returns to its original position, we see that the stable intersection has multiplicity $2$. 

\end{example}

\begin{definition}
Two tropical varieties are \emph{tangent} at a point $q$ if their intersection at $q$ has weight at least $2$, or if $q$ is in the interior of a bounded segment of their set-theoretic intersection. They are tritangent to each other if they are tangent at three disjoint places (either points or segments) counted with multiplicity. Two tritangents are \emph{equivalent} if the tangency points are linearly equivalent divisors.
\end{definition}
In particular, tritangent varieties may meet at three places with multiplicity $2$ each, at two places with multiplicity $4$ and $2$, or at one place with multiplicity $6$.  See Figure~\ref{fig:15tritangents} for various examples of curves that are tritangent to each other.

When $\Gamma$ is the tropicalization of a curve $C$, then any tangent of $C$ tropicalizes to a tangent of $\Gamma$, and in particular a tritangent tropicalizes to a tritangent. This can be deduced, for example, from \cite[Theorem 6.4]{OR11}.  

\section{Tropical divisors, theta characteristics and tritangent planes}
Divisors on tropical curves are defined analogously to algebraic curves. A \emph{divisor} $D$ on $\Gamma$ is a finite formal sum 
\[
D = a_1 p_1 + a_2 p_2 +\cdots+a_k p_k,
\]
where each $a_i$ is an integer, and each $p_i$ is a point of the curve. The \emph{degree} of $D$ is $a_1+a_2+\cdots+a_k$, and we say that $D$ has $a_i$ \emph{chips} at $p_i$. The divisor is said to be \emph{effective} if $a_i\geq 0$ for every $i$. 
In analogy with the algebraic case, there is a a suitable equivalence relation between divisors, and a notion of rank which, roughly speaking, reflects the dimension in which the divisor moves. The curve has a canonical divisor class   $K_\Gamma$ which fits in a tropical Riemann--Roch theorem
\[
r(D)-r(K_\Gamma-D) = \deg(D)-g+1,
\]
where $r$ is the rank of a divisor, and $g$ the genus of the tropical curve (\cite[Theorem 1.12]{BN07}).
See \cite{BJ16} for a lucid introduction to tropical divisor theory. 

Now, a \emph{tropical theta characteristic} on a tropical curve $\Gamma$ is defined in exactly the same way as algebraic theta characteristic.  Namely, it is a divisor class $[D]$ such that $2D\simeq K_\Gamma$. Since the Jacobian of a tropical curve of genus $g$ is isomorphic to a $g$-dimensional real torus $\mathbb{R}^g/\mathbb{Z}^g$ (\cite[Theorem 3.4]{BF11} and \cite[Section 5]{BBC17}), and theta characteristics are in bijection with its $2$-torsion points, there are $2^g$ theta characteristics. One of them is non-effective and the rest are effective. They are easily computed via the following algorithm, introduced by Zharkov \cite{Zha10}.

To get an effective theta characteristic, fix a cycle $\gamma$ in $\Gamma$. At every point $p$ that locally maximizes the distance from $\gamma$, place $a-1$ chips at $p$, where  $a$ is the number of incoming edges at $p$ from the direction of $\gamma$. The process is often described pictorially as follows. A fire spreads along the graph at equal speed away from $\gamma$. If $a$ is the number of incoming  fires at a point $p$,  we place $a-1$ chips at that point. To obtain the unique non-effective theta characteristic, repeat the same process, but replace $\gamma$ with the set of vertices of the graph, and place a negative chip at each vertex.

\begin{example}
Let $\Gamma$ be the curve in Figure~\ref{fig:theta} (where the infinite ends are omitted). As the genus is $2$, we expect the theta characteristics   to have degree $1$. For the picture on the left,  the middle of the bottom horizontal edge is the unique local maximum from the chosen cycle (marked with a red cycle). The corresponding theta characteristic has a single chip at that point. For the picture on the right, the middle of each horizontal edge locally maximizes the  distance from the vertices. The non-effective theta characteristic of this curve therefore consists of a negative chip at each of the three vertices, and a chip at the middle of each horizontal edge. Each of these divisors, when multiplied by two is equivalent to the canonical divisor, so they are half canonical.

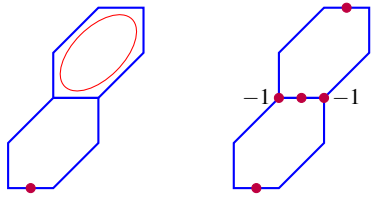
\begin{figure}[h]
\centering
\begin{tikzpicture}[scale=.6]
\draw[thick, blue] (0,0)--(1,0)--(2,1)--(2,2)--(1,2)--(0,1)--(0,0);
\draw[thick, blue] (0,0) -- (-1,-1)--(-1,-2)--(0,-2)--(1,-1)--(1,0)--(0,0);
\draw[fill,purple] (-0.5,-2) circle [radius=0.1];
\draw[rotate around={135:(1,1)},red] (1,1) ellipse (16pt and 30pt);

\begin{scope}[shift={(5,0)}]
\draw[thick, blue] (0,0)--(1,0)--(2,1)--(2,2)--(1,2)--(0,1)--(0,0);
\draw[thick, blue] (0,0) -- (-1,-1)--(-1,-2)--(0,-2)--(1,-1)--(1,0)--(0,0);
\draw[fill,purple] (-0.5,-2) circle [radius=0.1];
\draw[fill,purple] (.5,0) circle [radius=0.1];
\draw[fill,purple] (1.5,2) circle [radius=0.1];

\draw[fill,purple] (0,0) circle [radius=0.1];
\node [left] at (0,0) {$-1$};
\draw[fill,purple] (1,0) circle [radius=0.1];
\node [right] at (1,0) {$-1$};

\end{scope}
\end{tikzpicture}
\caption{Two theta characteristics on a curve of genus $2$.}
\label{fig:theta}
\end{figure}
\end{example}

Let $\Gamma$ be a tropical curve of degree $6$ and genus $4$ in $\mathbb{R}^3$. Its stable intersection with a tropical plane $\Pi$ is an effective divisor of degree $6$. We claim that its rank is $3$. Indeed, we can find a tropical plane through any three general points, and any pair of divisors obtained this way is linearly equivalent. 
By the tropical Riemann--Roch theorem,  a divisor of degree $6$ and rank $3$ has to be equivalent to the canonical divisor. 
Consequently,  every tritangent plane gives rise to an effective theta characteristic on $\Gamma$. By definition, non-equivalent tritangent planes correspond to different theta characteristics. 
It follows that the number of equivalence classes of tritangent planes is bounded above by the number of effective theta characteristics which is  $2^4-1 = 15$.  


\begin{theorem}\label{thm:mainTropThm}
A smooth tropical sextic curve $\Gamma$ in $\mathbb{R}^3$ has at most $15$ classes of tritangent planes. If it is the  tropicalization of a sextic $C$ on a smooth quadric in $\mathbb{P}^3$, then it has  exactly $15$ equivalence classes of tritangent planes. 
\end{theorem}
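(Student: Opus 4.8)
The plan is to prove the two assertions separately, reading off the upper bound from the discussion preceding the statement and then doing the real work for the lower bound in the realizable case. For the bound ``at most $15$'', I would simply invoke what has already been shown: the stable intersection of $\Gamma$ with a tropical plane is an effective divisor of degree $6$ and rank $3$, hence canonical by the tropical Riemann--Roch theorem \cite{BN07}, so the tangency divisor of any tritangent plane is an effective tropical theta characteristic. By definition inequivalent tritangent planes produce inequivalent such divisors, so the equivalence classes of tritangent planes inject into the set of effective theta characteristics. Since the tropical Jacobian is $\mathbb{R}^4/\mathbb{Z}^4$ and theta characteristics are its $2$-torsion points, there are $2^4 = 16$ of them, exactly one of which is non-effective, giving the bound of $15$.

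The harder direction is to realize \emph{all} $15$ effective theta characteristics when $\Gamma = \trop(C)$, and here I would pass through the algebraic curve. First I would invoke Theorem~\ref{thm:tritangentsAlgebraic}: the sextic $C$ carries exactly $120$ tritangent planes, in bijection with its $120$ odd theta characteristics. Each such plane is tangent to $C$ at three points, so by the result of \cite{OR11} recalled above its tropicalization is a tropical plane tangent to $\Gamma$ at three places; thus every algebraic tritangent plane specializes to a tropical tritangent plane of $\Gamma$. Correspondingly, each odd algebraic theta characteristic, being the class of the algebraic tangency divisor, specializes to the effective tropical theta characteristic cut out by the tropicalized plane.

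It then remains to show these specializations \emph{exhaust} the $15$ effective tropical theta characteristics. For this I would appeal to the recent results relating algebraic and tropical theta characteristics: under tropicalization the odd theta characteristics of $C$ surject onto the effective theta characteristics of $\Gamma$ (concretely, the $120$ odd classes distribute eight to each of the $15$ effective tropical classes, while the even classes account for the unique non-effective one). Granting this surjectivity, every effective theta characteristic of $\Gamma$ is the specialization of an odd theta characteristic of $C$, hence equals the tangency divisor of a tropical tritangent plane. Combined with the injection of the first paragraph, the equivalence classes of tritangent planes of $\Gamma$ are therefore in bijection with the $15$ effective theta characteristics, which proves the ``exactly $15$'' claim.

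The main obstacle is precisely this surjectivity onto the effective theta characteristics: a priori all $120$ odd algebraic classes could specialize into a proper subset of the $15$ effective tropical classes, leaving some effective theta characteristic realized by no tritangent plane at all. Ruling this out is exactly what the cited relation between algebraic and tropical theta characteristics supplies, and it is the crux of the argument. A secondary point to verify is that the tropicalization procedure of \cite{OR11} preserves the tangency type, so that each specialized plane is a genuine tropical tritangent, and that linear equivalence of the algebraic tangency divisors descends to the tropical equivalence relation, so that the bijection of equivalence classes is respected.
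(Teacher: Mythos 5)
Your proposal is correct and follows essentially the same route as the paper: the upper bound via the identification of tangency divisors with effective theta characteristics (of which there are $2^4-1=15$), and the lower bound by tropicalizing the $120$ algebraic tritangents of Theorem~\ref{thm:tritangentsAlgebraic} via \cite{OR11} and invoking the surjectivity (in fact the $8$-to-$1$ distribution) of odd theta characteristics onto effective tropical ones, which is precisely \cite[Theorem 1.1]{JL16}. You also correctly single out this surjectivity as the crux of the argument, exactly the role that citation plays in the paper's proof.
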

\begin{proof}
The first statement was shown in the discussion above.
By Theorem~\ref{thm:tritangentsAlgebraic}, the curve $C$ has $120$ tritangent planes. If $H$ is tritangent to $C$, then $\trop(H)$ is tritangent to $\Gamma$. 
Moreover, by \cite[Theorem 1.1]{JL16}, each tropical effective theta characteristic of $\Gamma$ is the tropicalization of $8$ odd theta characteristics of $C$. Since different theta characteristics correspond to non-equivalent tritangent planes, there are $15$ distinct classes of planes tritangent to $\Gamma$. 
\qed
\end{proof}

\begin{remark}\label{rmk:infinitePlanes}
It is quite possible for a tropical sextic to have an infinite continuous family of tritangent planes. However,  the tangency points of such a family will consist of linearly equivalent divisors, and as such the corresponding tritangent planes are equivalent. If the sextic is the tropicalization of an algebraic sextic, then each equivalence  consists of $8$ tritangent planes  (counted with multiplicity) that can be lifted to tritangent planes of the algebraic sextic.
\end{remark}

The proof above relies on the fact that the given tropical curve arises as the tropicalization of an algebraic curve. However, we expect the result to be true in general. 
\begin{conjecture}\label{conj:tritangents}
Every tropical sextic curve of genus $4$ in $\mathbb{R}^3$ has exactly $15$ equivalence classes of tritangent planes. 
\end{conjecture}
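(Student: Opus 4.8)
The plan is to establish the one direction that is not already proved: the upper bound of $15$ holds for every smooth tropical sextic by the discussion preceding Theorem~\ref{thm:mainTropThm}, so what remains is to show that each of the $15$ effective theta characteristics of $\Gamma$ is realized as the tangency divisor of an honest tritangent plane. Equivalently, I want to prove that the assignment sending an equivalence class of tritangent planes to its associated theta characteristic is \emph{surjective} for an arbitrary smooth $\Gamma$, and not merely for tropicalizations of curves on a smooth quadric, where surjectivity follows from the algebraic count of $120$ together with the lifting statement of \cite{JL16}.

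My first move would be to make the problem finite and explicit. When $\Gamma$ lies on a tropical quadric, Lemma~\ref{lemma:tropicalRuled} replaces it by a smooth tropical $(3,3)$-curve in tropical $\mathbb{P}^1\times\mathbb{P}^1$ and each tritangent plane by a tritangent $(1,1)$-curve; since smooth tropical $(3,3)$-curves are dual to unimodular triangulations of the $3\times 3$ lattice square, their combinatorial types form a finite list that can be enumerated. A smooth tropical $(3,3)$-curve lifts to an algebraic curve on a toric surface, so in this case the conjecture would in fact follow from Theorem~\ref{thm:mainTropThm} after checking that the lift can be taken on a \emph{smooth} quadric. The substantive content of the conjecture is therefore concentrated in the curves that either do not lie on any tropical quadric or force the quadric to degenerate to a cone.

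For these remaining curves I would argue purely tropically, constructing a witnessing plane for each effective theta characteristic. The tangency points are supplied by Zharkov's algorithm \cite{Zha10}: starting from a cycle $\gamma$, the fire-spreading process marks a degree-$3$ divisor $D$ representing the characteristic. I would then look for a tropical plane, determined by the position of its vertex and hence carrying three real parameters, whose stable intersection with $\Gamma$ is $2D'$ for some $D'\simeq D$, with each of the three contacts a genuine tangency (an intersection of weight $2$ or a bounded overlapping segment). A naive parameter count matches three conditions against three parameters, so one expects a solution; the real work is to prove that a geometrically valid solution always exists, in the correct divisor class, and to organize the degenerate configurations in which the contact is a segment and the tritangent moves in a continuous family, exactly as flagged in Remark~\ref{rmk:infinitePlanes}.

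The principal obstacle --- and the reason the statement is still a conjecture --- is that none of the algebraic inputs used for Theorem~\ref{thm:mainTropThm} are available in general: an arbitrary smooth tropical genus-$4$ sextic need not be the tropicalization of an algebraic sextic on a smooth quadric, so neither the count of Theorem~\ref{thm:tritangentsAlgebraic} nor the ``$8$ odd characteristics per tropical one'' multiplicity of \cite{JL16} can be invoked. A convincing proof would need either a uniform tropical existence argument for the witnessing plane that is insensitive to the combinatorial type, or a complete case analysis over all triangulation types together with a purely tropical proof that each effective theta characteristic lifts with the expected multiplicity $8$; I expect the symmetric configurations, where several characteristics share contact points or the tritangent degenerates into a family, to be the delicate cases.
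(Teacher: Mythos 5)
The statement you were asked to prove is a \emph{conjecture} in the paper, and the paper itself offers no proof of it: it establishes only (i) the upper bound of $15$, via the injection from equivalence classes of tritangent planes into the $2^4-1=15$ effective theta characteristics, and (ii) the exact count in the special case where $\Gamma$ is the tropicalization of a sextic on a smooth quadric (Theorem~\ref{thm:mainTropThm}), using the $120$ tritangents of Theorem~\ref{thm:tritangentsAlgebraic} together with the lifting result of \cite{JL16}. Your proposal reconstructs precisely this state of affairs: you correctly locate the open content in the surjectivity of the map from tritangent classes onto effective theta characteristics for curves that are not known to be realizable, and you correctly identify the reduction to tritangent $(1,1)$-curves of $(3,3)$-curves (Lemma~\ref{lemma:tropicalRuled}, Corollary~\ref{cor:ruledQuadric}) as the suggested route forward. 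Since neither you nor the paper proves the conjecture, and your account of what is known and what is missing agrees with the paper's, there is no gap to report \emph{relative to the paper}; your honesty about the remaining obstruction is exactly right.

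Two cautions on the points where you go beyond the paper. First, your claim that the quadric-resident case ``would in fact follow'' from realizability of smooth tropical plane $(3,3)$-curves plus Theorem~\ref{thm:mainTropThm} conceals a verification the paper never performs: one must check that the affine-linear map $\varphi$ of Lemma~\ref{lemma:tropicalRuled} is compatible with tropicalization, i.e.\ that $\varphi$ applied to the tropical $(3,3)$-curve is genuinely the tropicalization of the Segre image of an algebraic $(3,3)$-curve, and that the hypotheses needed to invoke \cite{JL16} (which concerns theta characteristics on the skeleton of the algebraic curve) are met; the paper states Corollary~\ref{cor:ruledQuadric} only as an equivalence of two unproved statements, not as a resolved case. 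Second, your three-parameters-versus-three-conditions count is the right heuristic but carries no force on its own: tropical stable intersections need not deform the way algebraic ones do, a candidate plane may produce a tangency divisor in the wrong class, and the segment-tangency degenerations of Remark~\ref{rmk:infinitePlanes} must be organized into equivalence classes with care. These are precisely the difficulties that keep Conjecture~\ref{conj:tritangents} open.
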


We now wish to explore the tropical analogue of the relation between quadric surfaces and $\mathbb{P}^1\times\mathbb{P}^1$, as described in Remark~\ref{rmk:oneOneCurves}. To examine the analogous statement in tropical geometry, recall that a smooth  tropical quadric in $\mathbb{R}^3$ is dual to a unimodular triangulation of the $3$-simplex with vertices $(0,0,0), (2,0,0), (0,2,0), (0,0,2)$. By the proof of \cite[Theorem 4.5.8]{MS15}, such a triangulation has a unique interior edge, corresponding to a unique bounded face of the quadric. This face can be seen as a model for tropical $\mathbb{P}^1\times\mathbb{P}^1$. More precisely, 

\begin{lemma}\label{lemma:tropicalRuled}
Let $\Sigma$ be a tropical smooth quadric surface in $\mathbb{R}^3$, and fix a rectangle $R$ in $\mathbb{R}^2$. Then there is an affine linear map from $R$ onto a parallelogram in the bounded face of $\Sigma$ inducing a bijection between curves of bi-degree $(d,d)$ in $\mathbb{R}^2$ whose bounded edges are all contained in $R$, and curves of degree $2d$ in $\mathbb{R}^3$ that are contained in $\Sigma$, and their bounded edges are contained in the parallelogram. 
\end{lemma}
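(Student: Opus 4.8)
The plan is to realize $\phi$ as a tropical Segre map and to split the statement into two parts: that $\phi$ matches the \emph{bounded} parts of the two families of curves, and that each bounded part extends uniquely to a full curve of the prescribed type, with the two extensions corresponding.

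First I would fix a concrete model for $\Sigma$. By \cite[Theorem 4.5.8]{MS15} and the discussion preceding the lemma, $\Sigma$ is dual to a unimodular triangulation of $2\Delta_3$ with a unique interior edge $e$, and the bounded face $P$ dual to $e$ is a parallelogram lying in the plane $d_e^\perp$, where $d_e$ is the primitive direction of $e$. The saturated lattice $\Lambda := d_e^\perp \cap \ZZ^3$ has rank $2$; since $P$ has four edges, $e$ lies in exactly four triangles of the triangulation, and a direct inspection of these four triangles shows that the two pairs of parallel edges of $P$ point in directions $r_1,r_2$ which, in a suitable basis $f_1,f_2$ of $\Lambda$, satisfy $r_1=f_1+f_2$ and $r_2=f_1-f_2$. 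In particular $r_1,r_2$ span an index-$2$ sublattice of $\Lambda$. This index $2$ is the combinatorial shadow of the fact that the quadric has degree $2$, and it is exactly what will promote bi-degree $(d,d)$ to degree $2d$. (Only the local structure of the triangulation around $e$ is used, and this is combinatorially forced, so no loss of generality is incurred by passing to this normal form.)

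Next I would define $\phi(x,y)=x f_1+y f_2+t_0$, choosing the translation $t_0$ and rescaling the fixed rectangle $R$ so that $P':=\phi(R)$ is a parallelogram contained in $P$, with edges along the diagonals $f_1,f_2$ of $P$. Because $\Lambda$ is saturated in $\ZZ^3$, the map $\phi$ is a lattice isomorphism $\ZZ^2\xrightarrow{\sim}\Lambda$, so it preserves primitivity of integer vectors and hence carries balanced weight-one rational complexes in $\RR^2$ to balanced weight-one rational complexes in the plane of $P$. Restricting to bounded parts, $\phi$ is then a homeomorphism taking the bounded edges of a bi-degree $(d,d)$ curve, which lie in $R$ by hypothesis, onto a balanced complex inside $P'$, and conversely; this gives the bijection at the level of bounded parts.

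The heart of the argument, and the step I expect to be the main obstacle, is to show that each bounded part extends uniquely to a full curve of the required type, and that the two extensions correspond. On the planar side the completion is immediate: the rays leaving $R$ continue straight in the directions $\pm e_1,\pm e_2$, recovering the $(d,d)$ curve. On the quadric side, a ray leaving $P'$ in direction $\pm f_1$ or $\pm f_2$ enters the unbounded two-dimensional faces of $\Sigma$, where its direction is \emph{not} among the four admissible end directions $-e_1,-e_2,-e_3,e_1+e_2+e_3$ of a space curve. I would show, from the recession structure of $\Sigma$ near $\partial P$, that balancing forces each such ray to split at a trivalent vertex into exactly two ends, according to $f_1=(e_1+e_2+e_3)+(-e_2)$, $-f_1=(-e_1)+(-e_3)$, $f_2=(e_1+e_2+e_3)+(-e_1)$, and $-f_2=(-e_2)+(-e_3)$ (each a balanced splitting, since e.g. $-(1,0,1)+(1,1,1)+(0,-1,0)=0$). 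Summing over the $d$ ends in each of the four planar directions then yields exactly $2d$ ends in each admissible direction, so the completed curve has degree $2d$; conversely every degree-$2d$ curve in $\Sigma$ with bounded edges in $P'$ arises this way. The delicate points are (i) proving that the splitting is forced and the extension unique, i.e.\ that a degree-$2d$ curve lying in $\Sigma$ is determined by its trace on $P'$, and (ii) checking that the local model of $\Sigma$ along each edge of $P$ indeed routes the incoming ruling ray to the claimed pair of ends. Both amount to an elementary but careful analysis of the unbounded cells of $\Sigma$ and their fan structure.
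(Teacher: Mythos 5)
Your construction follows the same skeleton as the paper's proof (an affine map of the rectangle into the bounded face, followed by extension of the outgoing rays via balanced splittings into pairs of infinite ends), but the step where you produce the directions $f_1,f_2$ is wrong, and the step you defer is precisely the geometric input that has to be supplied. The normal form you claim for the bounded face does not exist. Concretely, take the standard central triangulation of the simplex $2\Delta_3=\mathrm{conv}\{(0,0,0),(2,0,0),(0,2,0),(0,0,2)\}$, whose interior edge $e$ joins $(1,0,0)$ to $(0,1,1)$ and whose link is the cycle $(1,1,0),(1,0,1),(0,0,1),(0,1,0)$. The bounded face $P$ is then a parallelogram whose edge directions are $(1,1,0)$ and $(1,0,1)$ --- and these are exactly the ruling directions $u_1,u_2$ (the directions of the bounded edges of the tropical lines contained in $\Sigma$), not $f_1\pm f_2$ for any lattice basis. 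Your recipe would force $f_1=\tfrac12(r_1+r_2)=(1,\tfrac12,\tfrac12)$, which is not a lattice vector, so the claimed basis, and with it your map $\phi$ and the lattice-isomorphism argument, do not exist for this (completely standard) quadric. In fact the failure is not an unlucky choice: the edges of $P$ are dual to the triangles of the triangulation containing $e$, their directions always lie in $\{\pm u_1,\pm u_2,\pm(u_1+u_2),\pm(u_1-u_2)\}$, and since no unimodular tetrahedron contains $e$ together with two of the four vertices of $2\Delta_3$, some edges of $P$ are always parallel to $u_1$ or $u_2$; so your normal form is never realizable (nor need $P$ even be a quadrilateral --- its number of edges equals the number of triangles containing $e$, which depends on the triangulation; the lemma only ever uses a small parallelogram inside the face). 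Relatedly, the ``index $2$'' heuristic is off: the correct directions $u_1,u_2$ span $\Lambda$ with index $1$, and the degree doubling comes solely from each ray splitting into two ends, a count you do perform correctly.

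The repair is to define $f_1,f_2$ as the ruling directions themselves, i.e.\ the directions of the bounded edges of the two tropical lines through each point of the bounded face, whose existence is \cite[Theorem 4.5.8]{MS15}; these do form a basis of $\Lambda$, and your splitting identities of the form $f_i=(e_1+e_2+e_3)+(-e_j)$ are then correct. This is exactly what the paper does: it maps the rectangle onto a small parallelogram with sides $\lambda u_1,\lambda u_2$ inside the face, and extends each ray leaving it in direction $\pm u_i$ using the two ends of the corresponding tropical line in $\Sigma$. Note also that your two ``delicate points'' (uniqueness of the extension, and the routing of each ruling ray to the claimed pair of ends) are not refinements to be checked later: they are the content of the double-ruling theorem that the paper cites, and as you have set them up --- inside a normal form that does not exist --- the postponed ``careful analysis'' could not be carried out. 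As written, the proposal therefore has a genuine gap.
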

\begin{proof} 
Denote by $F$ the unique bounded face of $\Sigma$. 
By \cite[Theorem 4.5.8]{MS15}, there are two tropical lines through each point of $F$ that are fully contained in $\Sigma$. The bounded edge of each is fully contained in $F$, and is parallel to one of two directions, which we denote $u_1$ and $u_2$. These directions are determined by $F$, and do not depend on the point. 
Every ray in $F$  that is parallel to $u_i$ (for $i=1,2$) can be extended past the boundary of $F$ by attaching  infinite ends in two of the directions $\{-e_1,-e_2,-e_3,e_0\}$ (where $e_0=e_1+e_2+e_3$), and a ray parallel to $-u_i$  can be extended by attaching ends in the two remaining directions. 

Let $\varphi:R\to F$ be the affine linear map that sends a vertex of $R$  to $p$, and the two adjacent vertices to $p+\lambda u_1, p+\lambda u_2$ for some fixed point $p$ of $F$, and a small enough $\lambda$ so that the image is contained in $F$. Let $\Gamma$ be a $(d,d)$-curve $\Gamma$ in $\mathbb{R}^2$ whose bounded edges are contained in $R$. Each of the infinite ends is mapped by $\varphi$ to rays that are parallel to $\pm u_1$ or $\pm u_2$. By extending all the rays emanating from $\varphi(\Gamma\cap R)$ according to the discussion above, we get a curve of degree $2d$ in $\mathbb{R}^3$ that is contained in $\Sigma$. 
\qed
\end{proof}

The lemma suggests an alternative approach for proving Conjecture~\ref{conj:tritangents}. 
\begin{corollary}\label{cor:ruledQuadric}
For tropical sextics whose bounded edges are contained in the bounded face of a smooth tropical quadric, Conjecture~\ref{conj:tritangents} is equivalent to the statement that every $(3,3)$-curve in $\mathbb{R}^2$ has $15$ classes of tritangent $(1,1)$\nobreakdash-curves.
\end{corollary}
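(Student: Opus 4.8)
The plan is to use the affine linear map $\varphi$ of Lemma~\ref{lemma:tropicalRuled} to transport the entire tritangency problem between $\mathbb{R}^2$ and the bounded face $F$ of a smooth tropical quadric $\Sigma$, and then to verify that $\varphi$ preserves each ingredient in the definition of an equivalence class of tritangents. Fixing $\Sigma$, a rectangle $R$, and the map $\varphi\colon R\to F$, I would apply the lemma with $d=3$ to identify $(3,3)$-curves $\Gamma$ in $\mathbb{R}^2$ (with bounded edges in $R$) with genus-$4$ sextics $C=\varphi(\Gamma)$ contained in $\Sigma$ (with bounded edges in the parallelogram $\varphi(R)$), and with $d=1$ to identify $(1,1)$-curves in $\mathbb{R}^2$ with $(1,1)$-curves on $\Sigma$. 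Because $\varphi$ is affine linear and matches the primitive lattice directions of $R$ with the ruling directions $u_1,u_2$ of $\Sigma$, it restricts to an isomorphism of metric graphs; consequently it preserves stable intersection multiplicities, tangency at points and along segments, and linear equivalence of divisors, all of which are intrinsic to the metric-graph and chip-firing data. This already shows that $\varphi$ carries tritangent $(1,1)$-curves of $\Gamma$ to tritangent $(1,1)$-curves of $C$ on $\Sigma$, bijectively and respecting equivalence.

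The remaining link is the tropical analogue of Remark~\ref{rmk:oneOneCurves}: I would show that equivalence classes of tritangent planes to $C$ coincide with equivalence classes of tritangent $(1,1)$-curves to $C$ on $\Sigma$. A tropical plane $\Pi$ meets $\Sigma$ stably in a $(1,1)$-curve (this is the balancing and tropical Bezout count for a plane against a quadric), and conversely every $(1,1)$-curve on $\Sigma$ is such a trace; since the tangency points of a tritangent plane all lie on $C\subset\Sigma$, the plane $\Pi$ is tritangent to $C$ exactly when $\Pi\cap\Sigma$ is tritangent to $C$ inside $\Sigma$, with the same tangency divisor and hence the same equivalence class.

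Chaining the two identifications produces a class-preserving bijection between tritangent $(1,1)$-curves of $\Gamma$ and tritangent planes of $C$, so $\Gamma$ has exactly $15$ classes of tritangent $(1,1)$-curves if and only if $C$ satisfies Conjecture~\ref{conj:tritangents}. To match the universal quantifiers, I note that the scaling factor $\lambda$ in Lemma~\ref{lemma:tropicalRuled} may be taken arbitrarily small: given any $(3,3)$-curve, enclosing its bounded edges in $R$ and shrinking $\lambda$ makes $\varphi(R)\subseteq F$, so the curve is realized as $\varphi^{-1}(C)$ for a genuine sextic $C$ in $\Sigma$; conversely every sextic whose bounded edges lie in a quadric's bounded face arises from such a $\Gamma$. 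Thus the two families correspond term by term, giving the claimed equivalence.

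The step I expect to be the main obstacle is the tropical analogue of Remark~\ref{rmk:oneOneCurves}. Over an algebraically closed field it is automatic that $(1,1)$-curves on a smooth quadric are precisely its hyperplane sections, but tropically one must check that the stable intersection $\Pi\cap\Sigma$ is genuinely a $(1,1)$-curve for every tritangent plane, that no $(1,1)$-curve on $\Sigma$ fails to be a plane trace, and---most delicately---that tritangency is neither created nor destroyed when restricting from the ambient plane to its trace on $\Sigma$. Verifying this uniformly across the three combinatorial types of tritangency (meeting at three places with multiplicity $2$, at two places with multiplicities $4$ and $2$, or at one place with multiplicity $6$), and in the presence of the segment-tangency allowed by the definition, is where the real work lies; the transport through $\varphi$ and the matching of quantifiers are then essentially formal.
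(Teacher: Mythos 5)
Your transport step via Lemma~\ref{lemma:tropicalRuled} matches the paper's, but the heart of your argument --- the ``tropical analogue of Remark~\ref{rmk:oneOneCurves}'' identifying tritangent planes to $C$ with tritangent $(1,1)$-curves on $\Sigma$ --- is precisely what you leave unproven, and it is not a routine verification. Concretely: (1) tropical B\'ezout and balancing give the \emph{degree} of the stable intersection $\Pi\cap_{st}\Sigma$, not that it is a $(1,1)$-curve on $\Sigma$ in the sense of the lemma's correspondence; (2) the set-theoretic intersection $\Pi\cap\Sigma$ can contain two-dimensional pieces, and the stable intersection (a limit of perturbed intersections) need not pass through the points where $\Pi$ is tangent to $C$, so ``$\Pi$ is tritangent to $C$ exactly when $\Pi\cap\Sigma$ is tritangent to $C$ inside $\Sigma$'' does not follow merely from the tangency points lying on $\Sigma$; and (3) ``tritangent to $C$ inside $\Sigma$'' presupposes an intersection theory for curves lying on a tropical surface, which the paper never defines --- the only curve-curve intersections available are for plane tropical curves in $\mathbb{R}^2$, which is exactly why the paper pulls everything back through $\varphi^{-1}$. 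Since you yourself defer this step as ``where the real work lies,'' the proposal is a plan rather than a proof.

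The paper sidesteps the restriction problem entirely by running the plane--conic correspondence in the opposite direction: a $(1,1)$-curve tritangent to $\Gamma$ maps under $\varphi$ to a conic on $\Sigma$ that is tritangent to $\varphi(\Gamma)$; since by construction this conic is not contained in a tropical line or in any of the standard planes of $\mathbb{R}^3$, three general points on it span a \emph{unique} tropical plane, and that plane contains the whole conic, hence is tritangent to the sextic. No stable intersection of surfaces, and no intersection theory on $\Sigma$, is ever needed; the count then closes because $15$ is also the upper bound coming from effective theta characteristics. It is fair to note that the paper's proof, as written, only makes this extension direction explicit, whereas your outline is more symmetric about the two directions of the equivalence --- but the direction you add is exactly the one resting on the unproven restriction step, so the gap remains.
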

\begin{proof}
Let $\Gamma$ be a $(3,3)$-curve in $\mathbb{R}^2$, mapping to a sextic in $\mathbb{R}^3$ by $\varphi$ as in Lemma~\ref{lemma:tropicalRuled}. 
Every $(1,1)$-curve that is tritangent to $\Gamma$ maps to a conic curve in $\mathbb{R}^3$ that is tritangent to $\varphi(\Gamma)$ and contained in $\Sigma$. 
By construction, the conic curve is not contained in a tropical line or in any of the standard planes in $\mathbb{R}^3$. Therefore, we can find 3 general points on it that span a unique tropical plane. This plane contains the conic curve and is tritangent to $\varphi(\Gamma)$. 
\qed
\end{proof}

\begin{example}\label{ex:15tritangents}
The Figure below shows  $15$ equivalence classes of tritangent $(1,1)$-curves to a tropical $(3,3)$-curve in $\mathbb{R}^2$. By Corollary~\ref{cor:ruledQuadric}, this curve corresponds to a tropical sextic in $\mathbb{R}^3$ reaching the maximal number of tritangent planes. 
To find each tritangent curve,  we choose a non-trivial cycle in the graph, compute the corresponding theta characteristic via Zharkov's algorithm, and find a $(1,1)$-curve through it. 

\begin{figure}[!htb]
\centering
\input{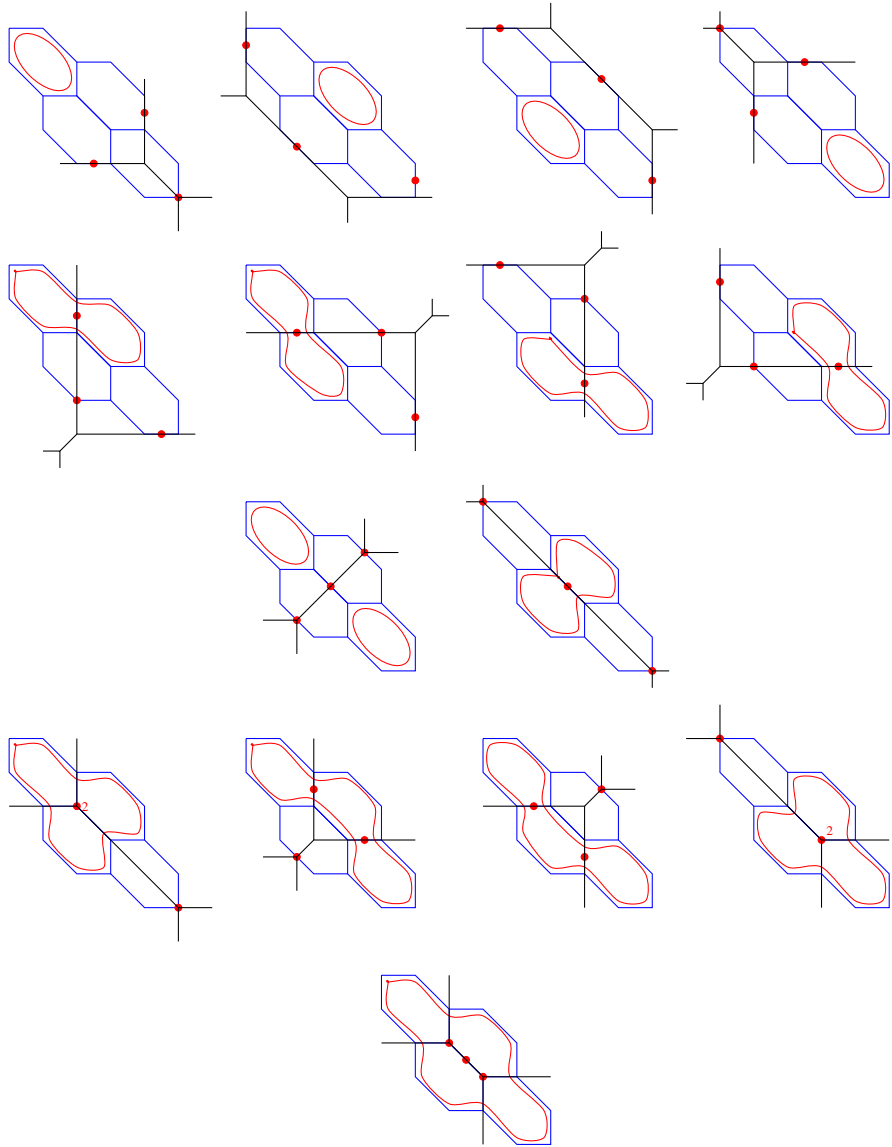}
\caption{15 tritangents to a tropical sextic.}
\label{fig:15tritangents}
\end{figure}

We stress that some of the odd theta characteristics, in fact, give rise to infinitely many tritangent $(1,1)$-curves, for instance the third tritangent in the second row in Figure~\ref{fig:15tritangents} when counting from the top left. However, as the tangency points of these different $(1,1)$-curves are equivalent divisors, they are all in the same equivalence class (cf.  Remark~\ref{rmk:infinitePlanes}).

\end{example}

\begin{acknowledgement}
This article was initiated during the Apprenticeship Weeks (22 August-2 September 2016), led by Bernd Sturmfels, as part of the Combinatorial Algebraic Geometry Semester at the Fields Institute.
We thank Lars Kastner, Sara Lambolgia, Steffen Marcus, Kalina Mincheva, Evan Nash, and Bach Tran who participated with us in the morning and evening sessions that led to this project. A great thanks goes to Kristin Shaw and Frank Sottile for many helpful discussions and insightful suggestions. We thank the editors Greg Smith, Bernd Sturmfels, and the liaison committee for always being vigilant, and making sure that this book sees the light of day. Finally, we thank the anonymous referees for their valuable comments and suggestions.
\end{acknowledgement}


\begin{thebibliography}{99.}%
%
%

\bibitem[ACGH85]{ACGH85}
Enrico~Arbarello, Maurizio~Cornalba, Phillip~Griffiths, and\ Joseph~Harris.
\newblock {\em Geometry of algebraic curves}, {V}ol. {I}, volume 267 of
  Grundlehren der Mathematischen Wissenschaften.
\newblock Springer-Verlag, New York, 1985.

\bibitem[Arn97]{Arn97} Vladimir Arnold, {\em Symplectization, complexification and mathematical trinities}, The Arnoldfest. Proceedings of a conference in honour of V. I. Arnold for his 60th birthday, Toronto, Canada, 1997.

\bibitem[BBC17]{BBC17} {\em Barbara Bolognese,\ Madeline Brandt,\ and\ Lynn Chua}, Fitness, Apprenticeship, and Polynomials,
      in \emph{Combinatorial Algebraic Geometry} (eds. G.G.Smith
      and B.Sturmfels), to appear.

\bibitem[BF11]{BF11}{Matthew~Baker\ and\ Xander~Faber, {\em Tropical theta characteristics, Metric properties of the tropical {A}bel--{J}acobi map}, Journal of Algebraic Combinatorics. An International Journal, {\bf 3}, (2011)  349--381}.

\bibitem[BCR98]{BCR98} {Jacek~Bochnak,\ Michel~Coste,\ and\ Marie-Fran\c{c}oise~Roy. {\em Real Algebraic Geometry}, Springer, New York, (1998)}.

\bibitem[BJ16]{BJ16}
Matthew~Baker\ and\ David~Jensen.
\newblock {\em Degeneration of Linear Series from the Tropical Point of View
  and Applications}, pages 365--433.
\newblock Springer International Publishing, Cham, 2016.


\bibitem[BLM16]{BLM16}{Matthew~Baker,\ Yoav~Len,\ Ralph~Morrison,\ Nathan~Pflueger,\ and\ Qingchun~Ren, {\em Bitangents of tropical plane quartic curves}, Mathematische Zeitschrift, {\bf 282}, (2016)  1017--1031}.

\bibitem[BN07]{BN07}{Matthew~Baker\ and\ Sergei~Norine, {\em Riemann--Roch and Abel--Jacobi theory on a finite graph}, Advances in Mathematics, {\bf 215}, (2007). no.~2,  766--788}.


\bibitem[BGM85]{BGM85} Thomas~Banchoff, Terence~Gaffney, and\ Clinton McCrory, {\em Counting tritangent planes of space curves}, Topology {\bf 24} (1985), no.~1, 15--23.

\bibitem[CJ17]{CJ17} Melody Chan and Pakawut Jiradilok, Theta characteristics of tropical $K_4$-curves, in \emph{Combinatorial Algebraic Geometry} (eds. G.G.Smith and B.Sturmfels), to appear.
      
      
\bibitem[Cob61]{Cob61} Arthur~Byron~Coble, {\em Algebraic geometry and theta functions}, Revised printing. American Mathematical Society Colloquium Publication, vol. X, Amer. Math. Soc., Providence, RI, 1961.

\bibitem[Cle64]{Cle64} Alfred Clebsch, {\em Ueber die Wendungsberührebenen der Raumcurven}. Journal f{\"u}r die Reine und Angewandte Mathematik 63 (1864), 1--8.

\bibitem[Dol12]{Dol12} Igor Dolgachev, {\em Classical algebraic geometry. A modern view}, Cambridge University Press, (2012).

\bibitem[Edg78]{Edg78} William~Edge, {\em Bring's curve}, J. London Math. Soc. (2) {\bf 18} (1978), no.~3, 539--545.

\bibitem[Edg81]{Edg81} William~Edge, {\em Tritangent planes of Bring's curve}, J. London Math. Soc. (2) {\bf 23} (1981), no.~2, 215--222.


\bibitem[Emc27]{Emc27} Arnold~Emch, {\em The Value of Mathematical Models and Figures}, Amer. Math. Monthly {\bf 34} (1927), no.~2, 76--79.

\bibitem[Emc28]{Emc28} Arnold~Emch, {\em Mathematical Models}, Series IV, Univ. of Ill. Bull. {\bf XXV} (1928), no.~43, 5--38.

\bibitem[Hil91]{Hil91} David~Hilbert, {\em {\"U}ber die reellen Z\"uge algebraischer Curven}, Math. Ann. {\bf 38} (1891), no.~1, 115--138.

\bibitem[Har77]{Har77} Robin~Hartshorne, {\em Algebraic geometry}. Graduate Texts in Mathematics, No. 52. Springer, New York (1977).

\bibitem[JL16]{JL16} David~Jensen\ and\ Yoav~Len, {\em Tropicalizations of theta characteristics, double covers, and Prym varieties}, \texttt{arXiv e-print 1606.02282}.

\bibitem[Kra96]{Kra96}{Vyacheslav~Alekseevich~Krasnov, {\em On theta characteristics of real algebraic curves}, Mathematical Notes, {\bf 3}, (1996)  347--350}.

\bibitem[Liu02]{Liu02}{Qing~Liu, {\em Algebraic geometry and arithmetic curves}, Oxford University Press, {\bf 6} (2012)}

\bibitem[MS15]{MS15} Diane~Maclagan\ and\ Bernd~Sturmfels, {\em Introduction to tropical geometry}, American Mathematical Society, Providence, RI, (2015).

\bibitem[Mum71]{Mum71}{David~Mumford, {\em Theta characteristics of an algebraic curve}, Ann. Sci. Ecole ´ Norm. Sup., {\bf 4}, (1971),  181--192}.

\bibitem[OR11]{OR11} Brian~Osserman\ and\ Joseph~Rabinoff, {\em Lifting non-proper tropical intersections}, Tropical and Non-Archimedean Geometry, {\bf 605} (2011), 15--44.

\bibitem[Stu17]{Stu17} Bernd Sturmfels, {\em Fitness, Apprenticeship, and Polynomials}, Combinatorial Algebraic Geometry (eds. G.G.Smith and B.Sturmfels), to appear, \texttt{arXiv e-print 1612.03539}.
      
\bibitem[Vig10]{Vig10}{Magnus Vigeland, {\em Smooth tropical surfaces with infinitely many tropical lines}, Arkiv f{\"o}r Matematik, {\bf 48}, (2010)  177--206}.

\bibitem[Vir08]{Vir08}{Oleg~Viro, {\em From the sixteenth Hilbert problem to tropical geometry}, Japanese Journal of Mathematics, 3(2) (2008) 185–-214}.

\bibitem[Zha10]{Zha10}{Ilia~Zharkov, {\em Tropical theta characteristics, Mirror Symmetry and Tropical Geometry}, {\bf 527}, (2010)  165-–168}.


\end{thebibliography}
\end{document}